\newtheorem{theorem}{Theorem}
\newtheorem{lemma}{Lemma}
\theoremstyle{definition}
\newtheorem{definition}{Definition}
\newtheorem{remark}{Remark}
\newtheorem{example}{Example}
\theoremstyle{plain}
\newtheorem{corollary}{Corollary}
\newcommand{\ol}{\overline}
\newcommand{\otoprule}{\midrule[\heavyrulewidth]}
\newcommand{\vt}{\vspace{.1cm}}
\newcommand{\vtt}{\vspace{.2cm}}
\newcommand{\R}{\mathbb{R} }
\newcommand{\q}{\mathbb{Q}_{\epsilon}^n }
\newcommand{\Z}{\mathbb{Z} }
\newcommand{\N}{\mathbb{N} }
\newcommand{\h}{\mathbb{H} }
\newcommand{\hf}{\mathbb{H}_{\mathbb F}^m}
\newcommand{\s}{\mathbb{S}}
\renewcommand{\rho}{\varrho}
\renewcommand{\theta}{\varTheta}
\renewcommand{\Theta}{\varTheta}
\renewcommand{\Sigma}{\varSigma}
\renewcommand{\tau}{\uptau}
\newcommand{\overbar}[1]{\mkern 1.5mu\overline{\mkern-1.5mu#1\mkern-1.5mu}\mkern 1.5mu}
\newcommand{\transv}{\mathrel{\text{\tpitchfork}}}
\newcommand{\tpitchfork}{%
 \vbox{
 \baselineskip\z@skip
 \lineskip-.52ex
 \lineskiplimit\maxdimen
 \m@th
 \ialign{##\crcr\hidewidth\smash{$-$}\hidewidth\crcr$\pitchfork$\crcr}
 }%
}
\begin{document}

\title[Weingarten Hypersurfaces]
{Elliptic Weingarten Hypersurfaces \\ of Riemannian Products}
\author{R. F. de Lima \and A. K. Ramos \and J. P. dos Santos}
\address[A1]{Departamento de Matem\'atica - UFRN}
\email{ronaldo.freire@ufrn.br}
\address[A2]{Departamento de Matemática Pura e Aplicada - UFRGS}
\email{alvaro.ramos@ufrgs.br}
\address[A3]{Departamento de Matem\'atica - UnB}
\email{joaopsantos@unb.br}
\subjclass[2010]{53B25 (primary), 53C42 (secondary).}
\keywords{Weingarten hypersurface -- Riemannian product -- constant scalar curvature -- invariant hypersurfaces.}

\maketitle

\begin{abstract}
Let $M^n$ be either a simply connected space form
or a rank-one symmetric space of noncompact type.
We consider  \emph{Weingarten hypersurfaces} of $M\times\mathbb R$, which are
those whose principal curvatures $k_1,\dots ,k_n$ and angle function $\theta$
satisfy a relation
$W(k_1,\dots,k_n,\theta^2)=0,$ being
$W$ a differentiable function which is  symmetric with respect to $k_1,\dots, k_n.$
When $\partial W/\partial k_i>0$ on the positive cone of $\R^n,$ a strictly convex Weingarten
hypersurface determined by $W$ is said to be \emph{elliptic}.
We show that, for a certain class of
Weingarten functions $W,$
there exist rotational strictly convex Weingarten
hypersurfaces of $M\times\mathbb R$ which are either topological spheres or entire graphs over $M.$
We establish a Jellett-Liebmann-type theorem by showing that
a compact, connected and
elliptic Weingarten hypersurface of either $\mathbb S^n\times\mathbb R$ or $\mathbb H^n\times\mathbb R$
is a rotational embedded sphere. Other uniqueness
results for complete elliptic Weingarten hypersurfaces
of these ambient spaces are obtained.
We also obtain existence results for constant scalar curvature hypersurfaces
of $\mathbb S^n\times\mathbb R$ and $\mathbb H^n\times\mathbb R$ which are either rotational or invariant
by translations (parabolic or hyperbolic). We apply our methods to give new proofs of the main
results by Manfio and Tojeiro on the classification of constant sectional curvature hypersurfaces
of \,$\mathbb S^n\times\mathbb R$ and $\mathbb H^n\times\mathbb R.$
\end{abstract}

\section{Introduction}

Amongst the compact surfaces of Euclidean space $\R^3,$ round spheres are known to be unique
with respect to several types of curvature constraints. For instance,
they are the only compact connected embedded
surfaces which have either constant mean curvature or constant Gaussian curvature, as attested
by the classical theorems of Alexandrov and Hilbert--Liebmann, respectively.
More generally, these theorems apply to
\emph{elliptic Weingarten surfaces}, which
are those whose principal curvature functions $k_1,k_2$ satisfy a relation
\[
W(k_1,k_2)= 0,
\]
where $W$ is a symmetric differentiable function on a domain $\Gamma\subset\R^2,$
which satisfies the ellipticity condition $\partial W/\partial{k_i}>0$ on $W^{-1}(0)\subset\Gamma.$
In \cite{galvez-mira(jdg)}, Gálvez and Mira improved these results by showing that  round
spheres are the only elliptic Weingarten spheres \emph{immersed} in $\R^3,$ which
proved affirmatively  a long standing conjecture by Alexandrov.

Elliptic Weingarten surfaces (sometimes called \emph{special Weingarten} surfaces)
of $\R^3$ and other three-spaces have been considered in many works
\cite{chern, folha-penafiel, hartman-wintner, rosenberg-saearp}.
More recently, Gálvez and Mira \cite{galvez-mira} conducted a thorough
investigation of rotationally invariant elliptic Weingarten surfaces
of homogeneous three-manifolds with isometry group of dimension $4$ (which include the Riemannian products
$\h^2\times\R$ and $\s^2\times\R$).
There, they
established many deep results regarding existence and uniqueness
of certain rotational spheres in the class of elliptic Weingarten surfaces
of these three-manifolds.

Inspired by the work of Gálvez and Mira, in the present paper
we consider \emph{Weingarten hypersurfaces} of Riemannian products $M^n\times\R.$
They are defined here as those whose  principal curvatures $k_1\,, \dots, k_n$
and angle function $\theta$ satisfy
\[W(k_1,\dots,k_n,\theta^2)=0,\]
where $W$ is a differentiable function which
is symmetric with respect to $k_1,\dots, k_n.$
Such a hypersurface $\Sigma$ is then called a $W$-hypersurface of $M\times\R.$
If, in addition, $\Sigma$ is strictly convex and
$W$ satisfies the ellipticity condition $\partial W/\partial k_i>0$
(for all $i=1,\dots ,n$) on the positive cone of $\R^n,$
we say that $\Sigma$ is an \emph{elliptic Weingarten hypersurface.}

Our work primarily concerns existence  and uniqueness (in the elliptic case) of
Weingarten hypersurfaces of $M\times\R$
when $M$ is either a simply connected space form or a rank-one symmetric space
of noncompact type (i.e., one of the hyperbolic spaces $\hf$).
The reason for considering these particular manifolds
relies on the fact that their geodesic spheres are isoparametric, i.e., have
constant principal curvatures. As we shall see,
this property allows us to construct
Weingarten vertical graphs in $M\times\R$ whose level hypersurfaces
are concentric geodesic spheres of $M$ (we call such graphs \emph{rotational}).

Given a general  Weingarten function $W,$ we associate
to it a first order ODE which
involves the principal curvature functions of geodesic spheres of $M.$ Then, we
call $W$  \emph{$M$-admissible}  if this equation admits a solution
$\rho:[0,\delta)\rightarrow[0,1)$ satisfying certain conditions (see Definition~\ref{def-weigartenpair}).
Then  we show that, for such an $M$-admissible $W,$  there exists a
rotational complete $W$-hypersurface $\Sigma$ of $M\times\R$  which is
homeomorphic to either the $n$-sphere $\s^n$ or Euclidean space $\R^n$.
In the latter case (which does not occur if $M=\s^n$),
$\Sigma$ is an entire graph over $M$, and in the former case, $\Sigma$
is obtained from the connected sum of
a graph over a closed ball of $M$ with its reflection over a horizontal hyperplane
of $M\times\R.$

Next, we study constant scalar curvature hypersurfaces of $\q\times\R$,
where $n\ge 3$ and $\q$ denotes the simply connected space form $\q$
of constant sectional curvature $\epsilon=\pm 1$ (i.e., $\s^n$ and $\h^n$).
Based on the fact that such hypersurfaces are Weingarten,
we establish that, for all $c>\epsilon n(n-1),$ there exists a
properly embedded strictly convex rotational
hypersurface $\Sigma$ in $\q\times\R$ with constant scalar curvature $c,$
which is, in fact, of constant sectional curvature.
Such a $\Sigma$ is either a sphere (if $c>0$) or an entire graph (if $c\le 0$).
For $c>\epsilon n(n-1),$ we also obtain a one-parameter family
of properly embedded Delaunay-type
rotational $n$-annuli in $\q\times\R$ with constant
scalar curvature $c,$ which are not of constant sectional curvature.
An analogous one-parameter family of
non periodic rotational $n$-annuli in $\h^n\times\R$ is obtained as well.

Similar results hold for hypersurfaces of $\h^n\times\R$ which
are invariant by either parabolic or hyperbolic translations.
Specifically, we show that for any
constant $c\in[-n(n-1),0),$
there exist in $\h^n\times\R$ an entire graph over $\h^n$ (of constant sectional curvature)
and a hypersurface which is symmetric
with respect to a horizontal hyperplane, both of constant scalar curvature $c$ and invariant
by parabolic translations. For such values of $c,$ we also show that there exists a one parameter
family of hypersurfaces of $\h^n\times\R$ with constant scalar curvature $c$ which are invariant by
hyperbolic translations. All these translational
hypersurfaces are properly embedded and homeomorphic to $\R^n.$

As these results indicate,
hypersurfaces of constant sectional curvature appear naturally when we are dealing with
constant scalar curvature hypersurfaces. Considering this fact,
we apply the methods developed here
to provide new proofs for the main theorems by Manfio and Tojeiro \cite{manfio-tojeiro}
regarding the classification of hypersurfaces of constant sectional curvature of \,$\q\times\R.$
(On this matter, see also \cite{aledo1, aledo2}, where the case $n=2$ was considered.)

Regarding uniqueness of elliptic  Weingarten hypersurfaces,
we establish  a Jellett-Liebmann type theorem.
Namely, by means of the Maximum Principle, the Alexandrov reflection technique, and
the methods and results in \cite{delima, esp-gal-rosen, esp-silva, esp-rosen, oliveira-schweitzer},
we show that, for $n\ge 3,$ a compact connected strictly convex
elliptic Weingarten hypersurface immersed in
$\q\times\R$  is necessarily an embedded rotational sphere. It is also shown that,
assuming such a  hypersurface to be complete, instead of compact, the same conclusion
holds under the additional assumption that
its height function has a critical point, and that
its least principal curvature is  bounded away from zero.

The paper is organized as follows. In Section \ref{sec-preliminaries}, we set some notation and formulae.
In Section \ref{sec-graphs}, we discuss graphs of Riemannian products  $M\times\R$ over parallel hypersurfaces of $M.$
In Section \ref{sec-ellipticWeingarten}, we introduce Weingarten hypersurfaces of
$M\times\R,$ establishing a key lemma.
In Section \ref{sec-W-rotational}, we consider rotational
Weingarten hypersurfaces of  $\hf\times\R$ and $\q\times\R.$ In Section \ref{sec-symmetricCSC},
we deal with constant scalar curvature hypersurfaces of $\q\times\R$  which are invariant by
either rotational or translational isometries. In Section \ref{sec-uniqueness}, we establish the Jellett--Liebmann type
theorem we mentioned, together with other rigidity results for Weingarten hypersurfaces of
$\q\times\R.$
In the concluding Section \ref{sec-sectionalcurvature},
we consider hypersurfaces of constant sectional curvature of $\q\times\R.$

\vtt

\noindent
{\bf Acknowledgements.} The second author was partially supported by CNPq/Brazil.
We are indebt to A. Martinez for letting us
know about Gálvez and Mira paper \cite{galvez-mira}, as well
as for suggesting to us bringing (a part of) it to the $n$-dimensional context.
We are also grateful to J. A. Gálvez, who read a preliminary  version of the paper and
made valuable suggestions. Finally, we thank the  referees for the careful reading
and insightful comments.

\section{Preliminaries} \label{sec-preliminaries}

Given an orientable Riemannian manifold $M^n,$ $n\ge 2,$ we shall consider the Riemannian product
$M\times\R$ endowed with its standard metric
\[
\langle\,,\,\rangle=\langle\,,\,\rangle_{M}+dt^2.
\]

Let $\Sigma$ be an oriented hypersurface of $M\times\R.$ Write
$N$ for its unit normal field and $A$ for its shape operator with respect to
$N,$ so that
\[
AX=-\overbar\nabla_XN, \,\, X\in T\Sigma,
\]
where $\overbar\nabla$ denotes the Levi-Civita connection of $M\times\R$ and
$T\Sigma$ stands for the tangent bundle of $\Sigma$.
The principal curvatures of $\Sigma,$ that is,
the eigenvalues of the shape operator $A,$ will be denoted by $k_1\,, \dots ,k_n$.
We shall say that $\Sigma$ is \emph{convex} (resp. \emph{strictly convex}) if, with respect to
a suitable orientation $N,$ $k_i\ge 0$ (resp. $k_i>0$) everywhere on $\Sigma,$ $i=1,\dots, n.$

The \emph{height function} $\xi$ and the \emph{angle function} $\Theta$ of $\Sigma$
are defined as
\[
\xi:=\pi_{\scriptscriptstyle\R}|_\Sigma \quad\text{and}\quad \Theta(x):=\langle N(x),\partial_t\rangle, \,\, x\in\Sigma,
\]
where $\partial _t$ denotes the gradient of the projection $\pi_{\scriptscriptstyle\R}$ of
$M\times\R$ on the second factor $\R.$
We denote the gradient of $\xi$ on $\Sigma$ by $T,$ which means that the equality
\begin{equation} \label{eq-gradxi}
T=\partial_t-\Theta N
\end{equation}
holds on $\Sigma.$ The trajectories of $T$ on $\Sigma$ will be called \emph{$T$-trajectories.}

Given $t\in\R,$ the set $P_t:=M\times\{t\}$ is called a \emph{horizontal hyperplane}
of $M\times\R.$ Horizontal hyperplanes are all isometric to $M$ and totally geodesic in
$M\times\R.$ In this context,
we call a transversal intersection $\Sigma_t:=\Sigma\transv P_t$ a \emph{horizontal section}
of $\Sigma.$ Any horizontal section $\Sigma_t$ is a hypersurface of $P_t$\,.
So, at any point $x\in\Sigma_t\subset\Sigma,$ the tangent space $T_x\Sigma$ of $\Sigma$ at $x$ splits
as the orthogonal sum
\begin{equation}\label{eq-sum}
T_x\Sigma=T_x\Sigma_t\oplus {\rm Span}\{T\}.
\end{equation}

The (first factor) manifolds $M$ we shall consider here are
the simply connected space forms
$\q$ of constant sectional curvature $\epsilon\in\{0,1,-1\},$ that is,
the Euclidean space $\R^n,$
the unit sphere $\s^n$ ($\epsilon=1)$ and the hyperbolic space $\h^n$ ($\epsilon=-1),$
as well as the rank-one symmetric spaces of noncompact
type, also known as the general hyperbolic spaces $\hf.$ Notice that the real hyperbolic
space $\h_\R^m$ is the standard hyperbolic space $\h^n$ of constant sectional curvature $-1$
(see, e.g., \cite{dominguez-vazquez}).

Let us recall that, denoting by $R$ and $\overbar R$ the curvature tensors of
$\Sigma$ and $\q\times\R,$ respectively,
for $X,Y, Z, W\in T\Sigma$, the Gauss equation reads as
\begin{equation}\label{eq-gauss001}
\langle R(X,Y)Z,W\rangle = \langle\overbar R(X,Y)Z,W\rangle+\langle AX,W\rangle\langle AY,Z\rangle-\langle AX,Z\rangle\langle AY,W\rangle,
\end{equation}
where $\overbar R$ vanishes identically for $\epsilon=0$ and, for $\epsilon=\pm 1,$  it  is given by (see \cite{daniel})
\begin{eqnarray} \label{eq-barcurvaturetensor}
 \epsilon\langle\overbar R(X,Y)Z,W\rangle &=& \langle X,W\rangle\langle Y,Z\rangle-\langle X,Z\rangle\langle Y,W\rangle \nonumber \\
 && +\langle X,Z\rangle\langle Y,\partial_t\rangle\langle W,\partial_t\rangle-\langle Y,Z\rangle\langle X,\partial_t\rangle\langle W,\partial_t\rangle \\
 && -\langle X,W\rangle\langle Y,\partial_t\rangle\langle Z,\partial_t\rangle+\langle Y,W\rangle\langle X,\partial_t\rangle\langle Z,\partial_t\rangle. \nonumber
\end{eqnarray}

\section{Graphs on Parallel Hypersurfaces} \label{sec-graphs}
Let $M_0^{n-1}$ and $M^n$ be two orientable Riemannian manifolds.
Assume that
\[f:M_0^{n-1}\rightarrow M^n\]
is an oriented embedding with unit normal field $\eta,$ and
suppose that there is an open interval $I\owns 0$
such that, for all $p\in M_0,$ the curve
\begin{equation}\label{eq-geodesic}
\gamma_p(s)=\exp_{\scriptscriptstyle M}(f(p),s\eta(p)), \, s\in I,
\end{equation}
is a well defined geodesic of $M$ without conjugate points. In this setting,
for any fixed $s\in I,$ the map
\[
\begin{array}{cccc}
f_s: & M_0 & \rightarrow & M\\
 & p & \mapsto & \gamma_p(s)
\end{array}
\]
is an embedding of $M_0$ into $M,$ which is said to be \emph{parallel} to $f.$
Observe that, given $p\in M_0$, the tangent space $f_{s_*}(T_p M_0)$ of $f_s$ at $p$ is the parallel transport of $f_{*}(T_p M_0)$ along
$\gamma_p$ from $0$ to $s.$ We also remark that, with the induced metric,
we will consider the unit normal $\eta_s$ of $f_s$ at $p$ given by
\[\eta_s(p)=\gamma_p'(s).\]

Having set the notation of the parallel immersions $f_s$, we introduce now the concept
of $(f_s,\phi)$-graph, which will play a fundamental role to prove some of the main results of the paper.

\begin{definition}
Let $\phi:I\rightarrow \phi(I)\subset\R$ be an increasing diffeomorphism, i.e., $\phi'>0.$
With the above notation, we call the set
\begin{equation}\label{eq-paralleldescription1}
\Sigma:=\{(f_s(p),\phi(s))\in M\times\R\,;\, p\in M_0, \, s\in I\},
\end{equation}
the \emph{graph} determined by $\{f_s\,;\, s\in I\}$ and $\phi,$ or $(f_s,\phi)$-\emph{graph}, for short
(Fig. \ref{fig-phigraph}).
\end{definition}

\begin{figure}[htbp]
\includegraphics{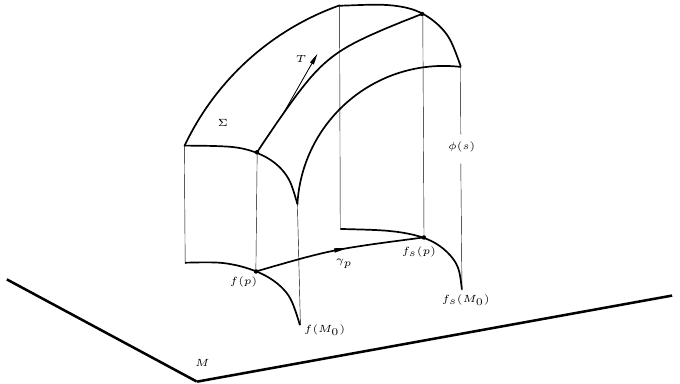}
\caption{An $(f_s,\phi)$-graph in $M\times\R.$}
\label{fig-phigraph}
\end{figure}

For an arbitrary point $x=(f_s(p),\phi(s))$ of an
$(f_s,\phi)$-{graph} $\Sigma,$ one has
\[T_x\Sigma=f_{s_*}(T_p M_0)\oplus {\rm Span}\,\{\partial_s\}, \,\,\, \partial_s=\eta_s+\phi'(s)\partial_t.\]
So, a unit normal to $\Sigma$ is
\begin{equation} \label{eq-normal}
N=\frac{-\phi'}{\sqrt{1+(\phi')^2}}\eta_s+\frac{1}{\sqrt{1+(\phi')^2}}\partial_t\,.
\end{equation}
In particular, its angle function is
\begin{equation} \label{eq-thetaparallel}
\Theta=\frac{1}{\sqrt{1+(\phi')^2}}\,\cdot
\end{equation}

As shown in \cite[Theorem 6]{delima-roitman}, any $(f_s,\phi)$-graph $\Sigma$
has the $T$-\emph{property}, meaning that
$T$ is a principal direction at any point of $\Sigma$.
More precisely, one has
\begin{equation}\label{eq-principaldirection}
AT=\frac{\phi''}{(\sqrt{1+(\phi')^2})^3}T.
\end{equation}

Given an
$(f_s,\phi)$-graph $\Sigma,$
let $\{X_1\,,\dots ,X_n\}$ be
an orthonormal frame of principal directions of $\Sigma$
in which $X_n=T/\|T\|.$ In this case,
for $1\le i\le n-1,$
the fields $X_i$ are all horizontal, that is,
tangent to $M,$ and constitute principal directions of the immersions $f_s$
at corresponding points (cf. \cite[Lemma 1]{delima-roitman})).
Therefore, setting
\begin{equation}\label{eq-rho}
\rho:=\frac{\phi'}{\sqrt{1+(\phi')^2}}
\end{equation}
and considering \eqref{eq-normal}, we have, for all $i=1,\dots ,n-1,$ that
\[
k_i=\langle AX_i,X_i\rangle=-\langle\overbar\nabla_{X_i}N,X_i\rangle=\rho\langle\overbar\nabla_{X_i}\eta_s,X_i\rangle=-\rho k_i^s,
\]
where $k_i^s$ is the $i$-th principal curvature of $f_s\,.$ Also,
it follows from \eqref{eq-principaldirection}
that $k_n=\rho'.$ Thus, the principal curvatures of the $(f_s,\phi)$-graph $\Sigma$
at $(f_s(p),\phi(s))\in\Sigma$ are
\begin{equation}\label{eq-principalcurvatures}
k_i=-\rho(s) k_i^s(p) \,\, (1\le i\le n-1) \quad\text{and}\quad k_n=\rho'(s).
\end{equation}

We remark that, up to a constant,
the function $\rho$ defined in \eqref{eq-rho} determines
the function $\phi.$
Indeed, it follows from equality \eqref{eq-rho} that
\begin{equation}\label{eq-phi10}
\phi(s)=\int_{s_0}^{s}\frac{\rho(u)}{\sqrt{1-\rho^2(u)}}du+\phi(s_0), \,\,\, s_0, s\in I.
\end{equation}

It should also be noticed that, from \eqref{eq-thetaparallel} and \eqref{eq-rho},
the unit normal $N$ defined in \eqref{eq-normal} can be
written as $N=-\rho\eta_s+\theta\partial_t$\,. Hence, the relation
\begin{equation} \label{eq-theta2}
\rho^2+\theta^2=1
\end{equation}
holds everywhere on any $(f_s,\phi)$-graph $\Sigma.$ In particular, $\rho=\|T\|$ on $\Sigma$.

\begin{definition}
A family $\mathscr F:=\{f_s:M_0\rightarrow M\,;\, s\in I\}$ of parallel
hypersurfaces is called \emph{isoparametric} if,
for each $s\in I,$ any principal curvature $k_i^s$ of $f_s\in\mathscr F$ is constant (possibly depending on
$i$ and $s$).
If so, each hypersurface $f_s$ is also called \emph{isoparametric.}
\end{definition}

It follows from \eqref{eq-principalcurvatures} that, if
$\mathscr F:=\{f_s:M_0\rightarrow M\,;\, s\in I\}$ is isoparametric and
$\Sigma$ is an $(f_s,\phi)$-graph in $M\times\R,$ then
all principal curvatures $k_i$ of $\Sigma$ at any point
$(f_s(p),\phi(s))$ are functions of $s$ alone.

\section{Elliptic Weingarten Hypersurfaces of $M\times\R$} \label{sec-ellipticWeingarten}

Let $\Gamma\subset\R^n$ be an open subset of $\R^n$ containing the \emph{positive cone}
\[\Gamma_+:=\{\mathbf{k}=(k_1\,,\dots ,k_n)\in\R^n\,;\, k_i > 0\}.\]
A symmetric function $W\in C^{\infty}(\Gamma)$
will be called a \emph{Weingarten function.}
If, in addition, $W$ satisfies the condition:
\begin{equation} \label{eq-condition}
\frac{\partial W}{\partial k_i}(\mathbf{k})>0 \,\,\,\, \forall\, \mathbf{k}\in\Gamma_+ \,\,\,\text{and}\,\,\, i=1,\dots ,n,
\end{equation}
then $W$ will be called an \emph{elliptic Weingarten function.}

\begin{example} \label{exam-basic}
Two distinguished elliptic Weingarten functions are the
following:
\begin{itemize}[parsep=1.5ex]
\item[i)] $W(k_1,\dots ,k_n)=\sum_{i_1<\cdots<i_r}k_{i_1}\dots k_{i_r}\,, \quad r\in\{1,\dots, n\}. $
\item[ii)] $W(k_1,\dots, k_n)=\sqrt{k_1^2+\cdots +k_n^2}.$
\end{itemize}
The function in (i) is the non normalized $r$-th mean curvature $H_r$ (notice that
$H_1=k_1+\cdots +k_n$ is the \emph{mean curvature} and $H_n=k_1\dots k_n$ is the \emph{Gauss-Kronecker curvature}), whereas
the function in (ii) is the \emph{norm of the second fundamental form} $\|A\|.$
We add that these functions are both homogeneous of degree one.
(Recall that, given $d\in\R,$ a function $f=f(\mathbf k)$ defined in a cone $\Gamma\subset\R^n$ is said to be \emph{homogeneous of degree $d$}
if $f(t\mathbf k)=t^df(\mathbf k)$ whenever $\mathbf k\in\Gamma$ and $t>0.$)
\end{example}

\begin{example} \label{exam-inverses}
Given  $W=W(k_1,\dots ,k_n)\in C^{\infty}(\Gamma_+),$ define
its \emph{inverse} $W^*$  as
\[
W^*(k_1,\dots ,k_n):={1}/{W(1/k_1\,, \dots ,1/k_n)}.
\]
It is easily checked that $W$ is homogeneous elliptic Weingarten if and only if
$W^*$ is homogeneous elliptic Weingarten.
\end{example}

Following \cite{galvez-mira}, given an open set $\Gamma\subset\R^n$ with
$\Gamma_+\subset\Gamma,$ we say that
\[
W=W(k_1,\dots ,k_n,\theta^2), \,\,\, (k_1\,, \dots, k_n,\theta^2)\in \Gamma\times[0,1],
\]
is a \emph{general Weingarten function} (resp. a \emph{general elliptic Weingarten function})
if, for any fixed $\theta\in [0,1],$ the map
\[
(k_1,\dots ,k_n)\in\Gamma \,\,\, \mapsto \,\,\, W(k_1,\dots,k_n,\theta^2)\in\R
\]
is a Weingarten function (resp. an elliptic Weingarten function).

\begin{definition}
We say that a hypersurface $\Sigma$ of a Riemannian product $M\times\R$  is a \emph{Weingarten hypersurface} if its
principal curvatures $k_1\,, \dots ,k_n$\,, together with its angle function $\theta,$  satisfy a relation
of the type
\begin{equation} \label{eq-generalweigarten}
W(k_1\,, \dots ,k_n,\theta^2)=0,
\end{equation}
where $W$ is a general  Weingarten function.
More specifically, we shall say that such a $\Sigma$ is a $W$-hypersurface.
If, in addition, $W$ is elliptic and
$\Sigma$ is strictly convex, then $\Sigma$ will be called an
\emph{elliptic Weingarten hypersurface}.
\end{definition}

Hypersurfaces of $M\times\R$ with  constant  mean curvature $H_r$
are canonical examples of  Weingarten hypersurfaces.
In the next section, we shall see that hypersurfaces of constant scalar curvature
in $\q\times\R$ are also  Weingarten hypersurfaces.

\begin{remark} \label{rem-MP}
We point out that the Hopf Maximum Principle applies to elliptic Weingarten hypersurfaces
(see \cite{folha-penafiel} for a
detailed discussion in the case $n=2$).
The same is true for the Continuation Principle, by the  results
in \cite{kazdan, protter}.
\end{remark}

The following lemma, which plays a fundamental role here,
characterizes  Weingarten $(f_s,\phi)$-graphs (with $\{f_s\}$ isoparametric)
as those whose associated $\rho$-functions are solutions of a certain first order
ordinary differential equation. In fact, it follows directly from the definition of Weingarten
hypersurface, the relations \eqref{eq-principalcurvatures}, and
equality \eqref{eq-theta2}.

\begin{lemma} \label{lem-graph}
Let $\Sigma$ be an $(f_s,\phi)$-graph in $M\times\R$ whose associated
family
\[\mathscr F:=\{f_s:M_0\rightarrow M\,;\, s\in I \}\]
of parallel hypersurfaces is isoparametric.
Then, given a Weingarten function $W\in C^\infty(\Gamma),$
we have that $\Sigma$ is a $W$-hypersurface of $M\times\R$ if and only if
its $\rho$-function satisfies the equality
\begin{equation}\label{eq-edo}
 W(-k_1^s\rho(s), \dots ,-k_{n-1}^s\rho(s),\rho'(s), 1-\rho^2)=0,
\end{equation}
where $k_1^s,\dots ,k_{n-1}^s$ are the principal curvatures of
$f_s\in\mathscr F.$
\end{lemma}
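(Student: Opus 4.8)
The plan is to verify directly that the relation $W(k_1,\dots,k_n,\theta^2)=c$ defining a $(c,W)$-hypersurface becomes exactly equation \eqref{eq-edo} once we substitute the explicit expressions for the principal curvatures and the angle function of an $(f_s,\phi)$-graph whose parallel family is isoparametric. There is essentially nothing to prove beyond bookkeeping: all the analytic content has already been extracted in Section \ref{sec-graphs}. So the proof is a short chain of substitutions, and the only thing to be careful about is keeping track of which quantities are functions of $s$ alone.

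First I would recall from \eqref{eq-principalcurvatures} that, writing $\{X_1,\dots,X_n\}$ for the orthonormal frame of principal directions with $X_n=T/\|T\|$, the principal curvatures of $\Sigma$ at the point $(f_s(p),\phi(s))$ are $k_i=-\rho(s)k_i^s(p)$ for $1\le i\le n-1$ and $k_n=\rho'(s)$. Since the family $\mathscr F$ is isoparametric, each $k_i^s$ is constant on $M_0$, hence all the $k_i$ are functions of $s$ alone (as already noted at the end of Section \ref{sec-graphs}). Next, from \eqref{eq-theta2} we have $\theta^2=1-\rho^2(s)$ on $\Sigma$. Substituting these into \eqref{eq-generalweigarten} and using that $W$ is symmetric in its first $n$ entries (so the order in which we list the $k_i$ is irrelevant), the defining relation $W(k_1,\dots,k_n,\theta^2)=c$ reads precisely
\[
W(-k_1^s\rho(s),\dots,-k_{n-1}^s\rho(s),\rho'(s),1-\rho^2)=c,
\]
which is \eqref{eq-edo}. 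The converse is immediate: if $\rho$ satisfies \eqref{eq-edo}, then reversing the substitution shows that the principal curvatures and angle function of $\Sigma$ satisfy \eqref{eq-generalweigarten}, so $\Sigma$ is a $(c,W)$-hypersurface.

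One small point worth addressing explicitly is where the hypotheses are used. The assumption that each $f_s$ is strictly convex guarantees that, after the substitution $k_i=-\rho k_i^s$ (recall $\rho>0$ wherever $\phi'>0$, by \eqref{eq-rho}), the argument $(k_1,\dots,k_n)$ lands in a region where $W$ is defined and the ellipticity condition \eqref{eq-condition} is meaningful; it is what makes the phrase ``$(c,W)$-hypersurface'' applicable here. The isoparametric hypothesis is what collapses the PDE content to a genuine ODE in the single variable $s$, since otherwise \eqref{eq-generalweigarten} would depend on $p$ as well. There is no real obstacle in this proof — the only ``difficulty'' is purely notational, namely making sure the sign conventions for $N$, $\eta_s$, and the shape operator are consistent with \eqref{eq-normal} and \eqref{eq-principalcurvatures}, which was already settled in Section \ref{sec-graphs}. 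Accordingly I would present the proof in a few lines, citing \eqref{eq-principalcurvatures} and \eqref{eq-theta2} and invoking the symmetry of $W$.
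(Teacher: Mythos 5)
Your proposal is correct and follows exactly the route the paper takes: the paper itself states that the lemma ``follows directly from the definition of Weingarten hypersurface, the relations \eqref{eq-principalcurvatures}, and equality \eqref{eq-theta2},'' which is precisely the substitution argument you spell out. Your additional remarks on where strict convexity and the isoparametric hypothesis enter are accurate and, if anything, make the argument more transparent than the paper's one-line justification.
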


It follows from  Lemma \ref{lem-graph} and Picard's
theorem for local existence of solutions of first order ODE's that,
for any general Weingarten function $W,$
there exist  local $W$-hypersurfaces in
$M\times\R,$ provided that $M$ admits families of  isoparametric hypersurfaces.
In this context, it is natural to ask under which conditions we can construct complete
$W$-hypersurfaces in $M\times\R.$ We shall pursue this question in the next sections.

\section{Rotational  Weingarten Hypersurfaces of $\q\times\R$ and $\hf\times\R$} \label{sec-W-rotational}

Let $M^n$ be either a simply connected space form $\q$ or a rank-one symmetric space of
noncompact type $\hf.$ Take a point $o\in M$ and let $f_s:\s^{n-1}\rightarrow M$ be
the isometric immersion such that $f_s(\s^{n-1})$ is the geodesic sphere of
$M$ with center at $o$ and radius $s>0.$ In this setting, define
\begin{equation} \label{eq-isoparametricspheres}
\mathscr F:=\{f_s:\s^{n-1}\rightarrow M\,;\, s\in (0,\mathcal R_M)\},
\end{equation}
where $\mathcal R_M$ is given by
\begin{equation} \label{eq-radius}
\mathcal R_M:=\left\{
\begin{array}{ccl}
+\infty & \text{if} & M=\R^n\, \text{or} \,\,\hf,\\[1ex]
\pi/2   & \text{if} & M=\s^n.
\end{array}
\right.
\end{equation}

As is well known, $\mathscr F$ is isoparametric and each sphere $f_s(\s^{n-1})$ is
strictly convex (see \cite{dominguez-vazquez} and the references therein). In accordance
to the notation of Section \ref{sec-graphs}, for each $s\in (0,+\infty),$
we choose the outward orientation of $f_s$\,,
so that \emph{any principal curvature $k_i^s$ of \,$f_s$ is negative}.

In what follows, for $M$ and $\mathscr F$ as above,
we construct complete strictly convex  Weingarten hypersurfaces
in $M\times\R$ from $(f_s,\phi)$-graphs, $f_s\in\mathscr F.$ Since the elements of
$\mathscr F$ are concentric geodesic spheres, we shall call such a hypersurface
\emph{rotational}.

The general idea for this construction is to consider \eqref{eq-edo} as an ODE with variable $\rho.$
From a suitable solution to this equation, we obtain a function $\phi$ (using \eqref{eq-phi10})
which, by Lemma \ref{lem-graph},
defines a Weingarten graph $\Sigma'$ in $M\times\R$ over an open ball
$B_\delta(o)\subset M$ with $\delta\le +\infty.$ If $\delta=+\infty$, $\Sigma'$
is complete and we are done. Otherwise,
$\partial\Sigma'$ is an $(n-1)$-sphere in a horizontal hyperplane $P_t:=M\times\{t\},$ and
the tangent spaces of $\Sigma'$ along its boundary are all vertical (i.e., parallel to $\partial_t$).
Hence, a complete Weingarten $n$-sphere is obtained by ``gluing'' $\Sigma'$
with its reflection over $P_t$ along their common boundary.

As we shall see, the effectiveness of this procedure depends on the existence
of a solution $\rho$ to \eqref{eq-edo} which can be defined at the singular point $s=0.$
This fact leads us to introduce the concept of \emph{admissible} Weingarten function, as given below.
(Notice that the principal curvatures $k_i^s$ of the spheres $f_s$ are not defined at $s=0.$)

\begin{definition} \label{def-weigartenpair}
Let $M$ and $\mathscr F$ be as above.
We say that  a general  Weingarten function $W$
is $M$-\emph{admissible} (or simply \emph{admissible}) if equation \eqref{eq-edo}
has a solution $\rho$ defined in $[0,\delta),$ $0<\delta\le\mathcal R_M$, which satisfies the conditions:
\begin{itemize}[parsep=1ex]
\item[(C1)] $\rho(0)=0.$
\item[(C2)] $0<\rho<1$ on $(0,\delta).$
\item[(C3)] $\rho'>0$ on $(0,\delta).$
\item[(C4)] The limits
$\displaystyle\lim_{s\rightarrow 0}\rho(s)k_i^s$ and $\displaystyle\lim_{s\rightarrow 0}\rho'(s)$
exist and are finite (recall that
$k_i^s$ is the $i$-th principal curvature of $f_s$) and,
if $\delta<\mathcal R_M$, the limit
$\displaystyle\lim_{s\rightarrow\delta}\rho'(s)$
also exist and is finite.
\end{itemize}
We assume that $\delta$ is maximal with respect to (C2)--(C3) and call
$\rho:[0,\delta)\rightarrow[0,1)$ an \emph{associated function} to  $W.$
In the case $\delta<\mathcal R_M,$ we will write (with a slight abuse of notation):
\[
\rho(\delta):=\lim_{s\rightarrow\delta}\rho(s) \quad\text{and}\quad \rho'(\delta):=\lim_{s\rightarrow\delta}\rho'(s).
\]
(Notice that, from the maximality of $\delta,$ we must have
$\rho(\delta)=1$ or $\rho'(\delta)=0.$)
\end{definition}

\begin{remark} \label{rem-graph}
From Lemma \ref{lem-graph} and equalities \eqref{eq-principalcurvatures}
and \eqref{eq-phi10}, a function $\rho$ satisfying
conditions (C2)--(C3) defines a rotational
$W$-graph $\Sigma$ over the punctured open ball $B_\delta(o)-\{o\}\subset M,$
whose function $\phi,$ up to a constant, is given by
\begin{equation} \label{eq-phi2}
\phi(s)=\int_{0}^{s}\frac{\rho(u)}{\sqrt{1-\rho^2(u)}}du\,, \,\, \,\,\, s\in(0,\delta).
\end{equation}
Also, the condition (C1) and the finiteness of the two first limits in (C4) (together with \eqref{eq-principalcurvatures})
imply that $\Sigma$ extends $C^2$-smoothly to the puncture $o.$ Analogously,
when $\delta<+\infty,$ the finiteness of the last limit in (C4)
gives that $\Sigma$ extends $C^2$-smoothly to its boundary $f_\delta(\s^{n-1})\times\{\phi(\delta)\}.$
\end{remark}

Now, we are in position to state and prove our first main result.

\begin{theorem} \label{th-main}
Let $M^n$ be either a simply connected space form
$\q$ or a rank-one symmetric space of
noncompact type $\hf.$
Given an $M$-admissible  Weingarten function $W,$
let $\rho:(0,\delta)\rightarrow[0,1)$ be
its associated function. Then, the following assertions hold:
\begin{itemize}[parsep=1ex]
\item[\rm i)] If $\delta<\mathcal R_M,$ $\rho(\delta)=1$ and $\rho'(\delta)>0,$
there exists an embedded strictly convex rotational $W$-sphere
in $M\times\R$ which is symmetric with respect to a horizontal hyperplane.

\item[\rm ii)] If $\delta=+\infty$ (so that $M$ is \,$\R^n$ or \,$\hf$), there
exists a rotational strictly convex  entire  $W$-graph in $M\times[0,+\infty)$
which is tangent to $M\times\{0\}$ at a single point, and whose height function is unbounded above.
\end{itemize}
In particular, if $W$ is elliptic, the $W$-hypersurfaces in (i) and (ii) are elliptic.
Consequently, if (ii) occurs  for such a $W,$ there is no compact $W$-hypersurface in \,$M\times\R.$
\end{theorem}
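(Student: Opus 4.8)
The plan is to start from the associated function $\rho:[0,\delta)\to[0,1)$ given by $M$-admissibility, construct the graph $\Sigma'$ over the ball $B_\delta(o)$ via Lemma~\ref{lem-graph} and formula~\eqref{eq-phi2}, and then analyze its behavior near $s=0$ and near $s=\delta$ separately. For the behavior at $s=0$: since $\rho(0)=0$, formula~\eqref{eq-principalcurvatures} gives $k_i=-\rho(s)k_i^s\to 0$ only if $\rho(s)k_i^s$ stays bounded, but the geodesic sphere curvatures $k_i^s$ blow up like $-1/s$ as $s\to 0$; so I first need to check that $\rho(s)/s$ has a finite positive limit, which should follow from plugging the asymptotics $k_i^s\sim -1/s$ into~\eqref{eq-edo} together with $\rho'(0)>0$ (forced by C3 and C1). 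This gives that $\Sigma'$ is a smooth graph closing up at the pole $(o,0)$ with a horizontal tangent plane there, i.e.\ homeomorphic to a closed $n$-disk when $\delta<\mathcal R_M$. Strict convexity of $\Sigma'$ is immediate from~\eqref{eq-principalcurvatures}: $k_i=-\rho k_i^s>0$ for $i\le n-1$ because $\rho>0$ and $k_i^s<0$ (outward orientation), and $k_n=\rho'>0$ by C3.

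For part~(i), assume $\delta<\mathcal R_M$, $\rho(\delta)=1$, $\rho'(\delta)>0$. The condition $\rho(\delta)=1$ means $\theta\to 0$ by~\eqref{eq-theta2}, so the tangent planes of $\Sigma'$ along its boundary $\partial B_\delta(o)$ become vertical (parallel to $\partial_t$); I would verify this by checking from~\eqref{eq-phi2} that $\phi(s)\to\phi(\delta)<\infty$ (the integrand $\rho/\sqrt{1-\rho^2}$ is integrable near $s=\delta$ because $\rho'(\delta)>0$ forces $1-\rho^2$ to vanish only linearly in $\delta-s$, giving an integrable $1/\sqrt{\cdot}$ singularity) while $\phi'(s)\to\infty$. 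Then I reflect $\Sigma'$ across the horizontal hyperplane $P_{\phi(\delta)}$ and glue along the common boundary sphere; the glued object $\Sigma$ is topologically $\s^n$, and it is $C^1$ by construction. To upgrade to $C^\infty$ and to confirm it is a genuine embedded $(c,W)$-hypersurface, I would reparametrize near the boundary using the graph over the vertical direction (writing $s$ as a function of $t$, which is legitimate since $\phi'\to\infty$ means $s\mapsto\phi(s)$ has a smooth inverse extension past $\phi(\delta)$ once one checks $s'(\phi(\delta))=0$ with the right nonvanishing second derivative); smoothness of the whole configuration then follows from the ODE~\eqref{eq-edo} being satisfied on both halves with matching data, together with the continuation principle mentioned in Remark~\ref{rem-MP}. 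Embeddedness is clear because $\Sigma$ is a graph over $B_\delta(o)$ on each half and the two halves sit on opposite sides of $P_{\phi(\delta)}$; the symmetry with respect to that hyperplane is built in.

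For part~(ii), assume $\delta=+\infty$, which forces $M=\hf$ by~\eqref{eq-radius}. Here $\Sigma'$ is already the whole hypersurface, a graph over all of $\hf$ via the exponential chart, hence an entire graph; it is strictly convex as above, tangent to $\hf\times\{0\}$ only at the pole $(o,0)$ (the unique point where $\rho$, hence $\|T\|$, vanishes). For unboundedness of the height: $\xi=\phi(s)=\int_0^s \rho/\sqrt{1-\rho^2}$, and since $\rho$ is increasing and positive on $(0,\infty)$, the integrand is bounded below by a positive constant on $[s_0,\infty)$ for any fixed $s_0>0$, so $\phi(s)\to\infty$. Finally, the nonexistence of a compact $(c,W)$-hypersurface is a short argument: if $\Sigma$ were compact, its height function $\xi$ attains a maximum at some $x_0$, where $\Sigma$ is tangent from below to a horizontal hyperplane; comparing at $x_0$ with the entire graph just constructed (translated vertically to touch $\Sigma$ at $x_0$ from above) via the Hopf Maximum Principle for strictly convex elliptic Weingarten hypersurfaces (Remark~\ref{rem-MP}) would force $\Sigma$ to coincide with an unbounded graph, a contradiction—alternatively, and more simply, at the maximum point the angle function is $\theta=\pm 1$ so $\rho=0$, forcing all $k_i^s\rho=0$; evaluating~\eqref{eq-generalweigarten} there and tracking the sign/monotonicity of $W$ along the $T$-trajectory through $x_0$ yields the contradiction with compactness.

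\textbf{Main obstacle.} The delicate point is the regularity analysis at the two singular values $s=0$ and $s=\delta$: extracting the precise asymptotics of $\rho$ from the implicit ODE~\eqref{eq-edo} (where the coefficients $k_i^s$ themselves are singular at $s=0$) and showing the resulting $C^1$-glued sphere is actually $C^\infty$. This requires care because the defining equation is not in solved form $\rho'=F(s,\rho)$ near these points; I expect to handle it by an appropriate change of variables (e.g.\ switching the roles of $s$ and the height $\phi$, or blowing up the pole) so that the equation becomes regular, and then invoking standard ODE regularity plus the continuation principle of Remark~\ref{rem-MP} to propagate smoothness across the gluing locus.
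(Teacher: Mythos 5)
Your proposal follows essentially the same route as the paper: build the rotational $(f_s,\phi)$-graph $\Sigma'$ over $B_\delta(o)$ from the associated $\rho$, read off strict convexity from \eqref{eq-principalcurvatures}, show $\phi$ stays bounded near $\delta$ via a substitution using $\rho'(\delta)>0$, note $\phi'\to\infty$ so the boundary tangent planes are vertical, reflect and glue for (i); for (ii) estimate $\phi(s)\gtrsim s$ to get unboundedness and invoke the Maximum Principle for the nonexistence claim. The one place you over-invest is the $C^\infty$-regularity at the gluing locus, which you flag as the ``main obstacle'': the theorem only asserts an embedded sphere, and the paper's own Remark~\ref{rem-MP}-adjacent remark following the theorem explicitly concedes that when the second fundamental form blows up at the boundary the glued sphere is merely $C^1$ along the equatorial horizontal section; so the paper does not attempt (nor need) the reparametrization-and-continuation argument you sketch, and in fact such an upgrade is impossible in general without extra hypotheses on $W$. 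Your secondary worry about the pole $s=0$ is handled in the paper implicitly by the definition of $M$-admissibility ($\rho$ is a priori defined on $[0,\delta)$ with $\rho(0)=0$), and the resulting generating curve has $\phi'(0)=0$, which is the standard smoothness condition at a rotational pole; no extra asymptotic analysis of $k_i^s\sim -1/s$ is needed.
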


\begin{proof}
 Assume the hypotheses in (i), and let $\Sigma'$ be the rotational $(f_s,\phi)$-graph defined by
 the function $\phi$ in \eqref{eq-phi2}. As we pointed out in
 Remark \ref{rem-graph}, $\Sigma'$ is
 defined over $B_\delta(o)\subset M$ and constitutes a $W$-hypersurface
 of $M\times\R.$ Also, by~\eqref{eq-principalcurvatures},
 $\Sigma'$ is strictly convex.

 Let us show that the function $\phi$ defining $\Sigma'$ is bounded in $(0,\delta).$ Indeed,
 since $\rho'(\delta)>0,$ there exist $a,\delta_0>0,$ $0<\delta-\delta_0<\delta,$ such that
 $\rho'(s)\ge a \,\forall s\in (\delta-\delta_0,\delta).$ Besides,
 $0\le\rho<1$ and $\rho(\delta)=1.$ Hence,
 \begin{eqnarray}
\int_{\delta-\delta_0}^{\delta}\frac{\rho(s)ds}{\sqrt{1-\rho^2(s)}}
& \le & \int_{\delta-\delta_0}^{\delta}\frac{\rho'(s)ds}{\rho'(s)\sqrt{1-\rho^2(s)}}
\le\frac{1}{a} \int_{\rho(\delta-\delta_0)}^{1}\frac{d\rho}{\sqrt{1-\rho^2}} \nonumber\\
 & = & \frac{1}{a}\left(\frac{\pi}{2}-\arcsin(\rho(\delta-\delta_0))\right)\le \frac{\pi}{2a}\,, \nonumber
\end{eqnarray}
which implies that $\phi$ is bounded. So, we can set $\phi(\delta)$ for the limit
of $\phi(s)$ as $s\rightarrow\delta.$

Since $\rho(\delta)=1,$ we have from \eqref{eq-phi2} that $\phi'(s)\rightarrow+\infty$ as
$s\rightarrow\delta.$ This gives that,
along $\partial\Sigma'=f_\delta(\s^{n-1})\times\{\phi(\delta)\},$
the tangent spaces are all parallel to $\partial_t$\,. Therefore,
since $\Sigma'$ extends $C^2$-smoothly to its boundary (see Remark \ref{rem-graph}),
if we denote by $\Sigma''$ the reflection of $\Sigma'$ with respect to $M\times\{\phi(\delta)\},$
we have that
\[
\Sigma:={\rm closure}\, \Sigma'\cup {\rm closure}\, \Sigma''
\]
is a strictly convex rotational $W$-sphere of $M\times\R.$
This proves (i).

Now, let us assume that the hypotheses in (ii) hold.
In this case, since $\delta=+\infty,$
the $(f_s,\phi)$-graph $\Sigma$ defined by $\phi$ in \eqref{eq-phi2}
is an entire rotational $W$-graph of
$M\times\R.$ Also, since $\phi(0)=0$ and
$\phi(s)>0$ for any $s>0,$ $\Sigma$ is contained in the closed half-space
$M\times[0,+\infty),$ being tangent to $M\times\{0\}$ at $o.$

It remains to prove that the height function
of $\Sigma$ is unbounded above. For that, we fix $\delta_0 >0$
and set $\iota_{\delta_0}(s)$ for the infimum
of the function $u\mapsto\rho(u)/\sqrt{1-\rho^2(u)}$ on $[\delta_0, s)$, $s>\delta_0.$
It is easily seen that $\iota_{\delta_0}(s)$
is bounded away from zero. Therefore,
\[
\phi(s)=\int_{0}^{s}\frac{\rho(u)}{\sqrt{1-\rho^2(u)}}du\ge
\int_{\delta_0}^{s}\frac{\rho(u)}{\sqrt{1-\rho^2(u)}}du\ge \iota_{\delta_0}(s)(s-\delta_0) \,\,\,\, \forall s>\delta_0,
\]
which gives that $\phi$ is unbounded above.

The last assertion regarding the non existence of compact elliptic $W$-hypersurfaces in the occurrence
of (ii)  follows  from the
Maximum Principle (see Remark \ref{rem-MP}).
\end{proof}

\begin{figure}[htbp]
\includegraphics{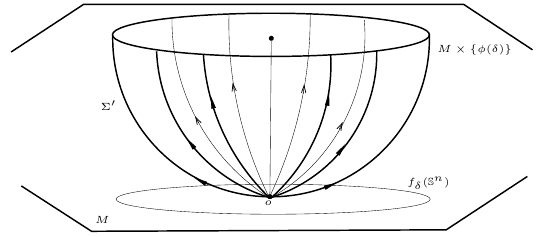}
\caption{\small A $W$-hemisphere in $M\times\R$ and its $T$-trajectories.}
\label{fig-sphere}
\end{figure}

In what follows, for $\epsilon\in\{0,-1,1\},$ we shall consider the trigonometric functions
$\tan_\epsilon=\sin_\epsilon/\cos_\epsilon$ and $\cot_\epsilon=1/\tan_\epsilon,$
where $\cos_\epsilon$ and $\sin_\epsilon$ are defined as in
Table \ref{table-trigfunctions}.
\begin{table}[thb]%
\centering %
\begin{tabular}{cccc}
\toprule %
   {{\small\rm Function}}                & $\epsilon=0$ & $\epsilon=1$   & $\epsilon=-1$ \\\otoprule %
$\cos_\epsilon (s)$    & $1$          & $\cos s$        & $\cosh s$     \\\midrule
$\sin_\epsilon (s)$    & $s$          & $\sin s$         & $\sinh s$   \\\bottomrule
\end{tabular}
\vtt
\caption{Definition of $\cos_\epsilon$ and $\sin_\epsilon$}
\label{table-trigfunctions}
\vspace{-.8cm}
\end{table}

\begin{example}
Given constants $a, b$ and $c$ with $a+b\ne0,$ and $\epsilon=\pm 1,$ consider the following
elliptic general Weingarten function $W_\epsilon\colon\R^2\times[0,1]\rightarrow\R:$
\[
W_\epsilon(k_1,k_2,\theta^2)=k_1k_2-\frac{c-\epsilon a\theta^2}{a+b}\cdot
\]
For $n=2,$ Gauss equation \eqref{eq-barcurvaturetensor} reduces to
$K=H_2+\epsilon\theta^2,$ where $K$ is the intrinsic curvature of $\Sigma$ and $H_2=k_1k_2$ is its
extrinsic  curvature. Hence, a $W_\epsilon$-surface
$\Sigma$ of $\mathbb Q_\epsilon^2\times\R$ satisfies the relation
\[
aK+bH_2=c.
\]
(Such surfaces were studied in \cite{folha-penafiel}.)

Let us see that $W_\epsilon$ is  $\mathbb Q_\epsilon^2$-admissible, provided that
\begin{equation} \label{eq-conditionsexample}
a\epsilon>0, \quad a+b>0, \quad\text{and}\quad c-a\epsilon>0.
\end{equation}
To that end, consider  a family $\mathscr F$ of concentric geodesic circles
in $\mathbb Q_\epsilon^2,$ and recall that, with the outward orientation, a geodesic
circle of radius $s$ in $\mathbb Q_\epsilon^2$ has curvature $k(s)=-\cot_\epsilon(s).$
In this setting, the ODE \eqref{eq-edo} takes the form
\begin{equation} \label{eq-edoexample}
(a+b)\cot_\epsilon(s)\rho(s)\rho'(s)+\epsilon a(1-\rho^2(s))=c.
\end{equation}
Separating variables and integrating, one easily concludes that
the solution $\rho$ of \eqref{eq-edoexample} satisfying $\rho(0)=0$ is given by
\[
\rho(s)=\left(\frac{c-a\epsilon}{a\epsilon}((\cos_\epsilon(s))^{\frac{-2a}{a+b}}-1)\right)^{1/2}.
\]
In particular, $\rho$ is increasing and satisfies $\rho(\delta)=1,$ where
\[
\delta:=\left(\frac{c}{c-a\epsilon}\right)^{\frac{a+b}{-2a}}.
\]
Also, from \eqref{eq-edoexample}, we have that
$$\rho'(\delta)=\frac{c}{a+b}\tan_\epsilon(\delta)>0.$$
In addition, a direct application of  L'Hôpital's rule gives that
\[
\lim_{s\rightarrow 0}(\rho(s)\cot_\epsilon(s))=\left(\frac{c-a\epsilon}{a+b}\right)^{1/2}.
\]
This limit, together with \eqref{eq-edoexample}, yields
\[
\lim_{s\rightarrow 0}\rho'(s)=\frac{c-a\epsilon}{\sqrt{(a+b)(c-a\epsilon)}},
\]
which completes the proof that $W_\epsilon$ is $\mathbb Q_\epsilon^2$-admissible with
associated function $\rho.$

Clearly, $\rho$ satisfies the conditions of Theorem \ref{th-main}-(i). Therefore,
for all constants $a, b,$ and $c$ satisfying \eqref{eq-conditionsexample}, there exists
a rotational elliptic  $W_\epsilon$-sphere in $\mathbb Q_\epsilon^2\times\R.$
\end{example}

\begin{example}
Given $b<0<a$ and an integer $n\ge 2,$ write $\alpha:=\left(-{a}/{b}\right)^{\frac{1}{n-1}}$ and
set
\begin{equation} \label{eq-c}
c=a(n-1)\alpha.
\end{equation}
Under these conditions, the Weingarten function
$W\in C^{\infty}(\R^n)$ given by
\[
W=aH+bH_n-c,
\]
is $\q$-admissible.
Indeed,
considering Lemma \ref{lem-graph}
for $W,$ $M=\q,$  and $\mathscr F$
as in \eqref{eq-isoparametricspheres}, we have
that the ODE \eqref{eq-edo} takes the form
\begin{equation} \label{eq-edoexample01}
a\left((n-1)\cot_\epsilon(s)\rho(s)+\rho'(s)\right)+b((\cot_\epsilon(s)\rho(s))^{n-1}\rho'(s))=c.
\end{equation}

Thus, defining
\[
\delta:=\left\{
\begin{array}{lll}
\arctan_\epsilon({1}/{\alpha}) & \text{if} & \epsilon\ne-1 \,\,\text{or}\,\,\, \epsilon=-1 \,\, \text{and} \,\, \alpha>1,\\[1ex]
+\infty                           & \text{if} & \epsilon=-1 \,\,\, \text{and} \,\,\, \alpha\le 1,
\end{array}
\right.
\]
and considering \eqref{eq-c}, we have that
\[
\rho(s)=\alpha\tan_\epsilon(s), \,\,\, s\in[0,\delta),
\]
is the solution of \eqref{eq-edoexample01} satisfying
$\rho(0)=0.$ Moreover, for $\delta<+\infty,$ we have that
$\rho(s)\rightarrow 1$ as $s\rightarrow\delta.$ It is also clear
that $\rho$ satisfies the conditions (C1)--(C4) of Definition \ref{def-weigartenpair},
which implies that $W$ is $\q$-admissible.

Therefore, by Theorem \ref{th-main}, there exists a rotational strictly convex $W$-hypersurface
$\Sigma$ in
$\q\times\R$ which is a sphere, if $\delta<+\infty,$ or an entire graph, if $\delta=+\infty.$
We remark that the  principal curvatures $k_1,\dots, k_{n-1}$ of $\Sigma$ are all constant and
equal to $\alpha.$ 
For $\epsilon=0,$
$\Sigma$ is the totally geodesic sphere of radius $1/\alpha.$
\end{example}

Let $M$ be either $\hf$ of $\s^n.$ It was proved
in \cite{delima-manfio-santos} that, for all $c>0,$ the
elliptic Weingarten function
$W_c=H_r-c$
is $M$-admissible. Moreover, for $M=\hf,$
there is a constant $C(\mathbb F)>0$ such that
the associated function $\rho_c$ to $W_c$ satisfies the
conditions of Theorem \ref{th-main}-(i) (resp. Theorem \ref{th-main}-(ii))
if $c>C(\mathbb F)$ (resp. $c\le C(\mathbb F)$).
In our next result, we show that this situation is somewhat typical.

\begin{theorem} \label{th-homogeneous}
Let $M$ be as in Theorem \ref{th-main}. Given $c>0,$
let $W_c$ be the Weingarten function defined by
\,$W_c=f-c,$ where $f=f(k_1,\dots, k_n)$  is a
symmetric homogeneous function defined on an open cone $\Gamma\supset\Gamma_+$ in
$\R^n$. Suppose that, for
some $c_0>0,$ $W_{c_0}$ is $M$-admissible
with associated function $\rho_0:[0,\delta_0)\rightarrow[0,1).$
Then, one has:
\begin{itemize}[parsep=1ex]
 \item[\rm i)] If $\delta_0<\mathcal R_M,$ there is a constant $c_1\ge c_0$ with the following
 property: For all $c>c_1,$ $W_c$ is $M$-admissible and
there exists an embedded
 $W_c$-sphere in $M\times\R$ as in the statement of Theorem \ref{th-main}-(i).
 \item[\rm ii)] If $\delta_0=+\infty,$ for all positive $c\le c_0$\,,
$W_c$ is $M$-admissible and
there exists an entire rotational $W_c$-graph in $M\times[0,+\infty)$
as in the statement of Theorem \ref{th-main}-(ii).
\end{itemize}
\end{theorem}

\begin{proof}
First, we present a general construction that will be used along the proof.
From the hypothesis, we have that the function $\rho_0:[0,\delta_0)\rightarrow[0,1)$
satisfies
\begin{equation} \label{eq-c0}
f(-k_1^s\rho_0,\dots,-k_{n-1}^s\rho_0,\rho_0')= c_0\,.
\end{equation}
Given $c> 0,$ multiplying both sides of \eqref{eq-c0} by $c/c_0$\,, and denoting
by $d$ the degree of homogeneity of the Weingarten function $f,$ one gets
\[
f(-(c/c_0)^{1/d}k_1^s\rho_0,\dots,-(c/c_0)^{1/d}k_{n-1}^s\rho_0,(c/c_0)^{1/d}\rho_0')= c\,,
\]
which implies that the function $\ol{\rho}_c\colon [0,\delta_0)\to [0,1)$
defined by
\begin{equation} \label{eq-rhoc}
\ol{\rho}_c(s)=\left(\frac{c}{c_0}\right)^{1/d}\rho_0(s)
\end{equation}
satisfies
$W_c(-k_1^s\ol{\rho}_c(s),\ldots,-k^2_{n-1}\ol{\rho}_c(s),\ol{\rho}_c'(s))
= 0$. In particular, we may possibly
extend $\ol{\rho}_c$ past $\delta_0$ or restrict $\ol{\rho}_c$ to a
subinterval to obtain a solution
$\rho_c\colon[0,\delta_c)\to [0,1)$ to
\begin{equation}\label{eqfc}
f(-k_1^s\rho_c(s),\dots,-k_{n-1}^s\rho_c(s),\rho_c'(s))= c,
\end{equation}
where $\delta_c$ is maximal with respect to conditions (C2) and (C3)
of Definition \ref{def-weigartenpair}.
Concerning the $M$-admissibility of $W_c$,
it is straightforward to see that $\rho_c$ satisfies (C1) and the
first two conditions of (C4), hence $W_c$ will be
$M$-admissible, if $\delta_c = \infty$ or if, when $\delta_c<\infty$,
$\lim_{s\to \delta_c}\rho'_c(s)$ exists and is finite.

Having defined the family $\{\rho_c\}_{c>0}$, we next prove (i), so we assume that $\delta_0<\mathcal R_M.$ In this case, set
\[
c_1:=\frac{c_0}{(\rho_0(\delta_0))^d}\geq c_0.
\]
For a given $c>c_1$, let $\rho_{c}$ be defined as above. We claim that $\delta_c < \delta_0$.
In fact, if $\delta_c\geq \delta_0$,
then the fact that
$\rho_c\vert_{[0,\delta_0)} = \left(\frac{c}{c_0}\right)^{1/d}\rho_0$,
implies that
\[
\rho_c(\delta_0)=\left(\frac{c}{c_0}\right)^{1/d}\rho_0(\delta_0)>\left(\frac{c_1}{c_0}\right)^{1/d}\rho_0(\delta_0)=1,
\]
contradicting the maximality of $\delta_c$ with respect to condition (C2)
of Definition \ref{def-weigartenpair}.
In particular, $\rho_0'(\delta_c)>0$ and we have that
$$\lim_{s\to \delta_c}\rho_{c_1}'(s) =
\left(\frac{c}{c_0}\right)^{1/d}\rho_0'(\delta_c)>0.$$
This proves that $W_{c}$ is is $M$-admissible and that
$\rho_c(\delta_c)=1$.
In particular, Theorem \ref{th-main}-(i) applies to $W_c$, and this
proves (i).

Let us assume now that $\delta_0=+\infty$ to prove (ii). Then, for all positive $c\le c_0$\,, one has
\[
\rho_c(s)=\left(\frac{c}{c_0}\right)^{1/d}\rho_0(s)\le \rho_0(s)<1 \,\,\, \forall s\in [0,+\infty),
\]
which clearly implies that $\rho_c:[0,+\infty)\rightarrow [0,1)$ is well defined and satisfies the conditions (C1)--(C4) of
Definition~\ref{def-weigartenpair}, showing that
$W_c$ is $M$-admissible.
Therefore, Theorem \ref{th-main}-(ii) applies. This shows (ii) and
finishes our proof.
\end{proof}


\begin{remark}
Theorem~\ref{th-homogeneous} can be improved under the additional
assumption that $W_c$ is uniformly elliptic, in the sense that
there exists a constant $a>0$ such that
$\frac{\partial f}{\partial x_i}\geq a$ in $\Gamma^+$.
In this case,  \emph{$W_c$ is $M$-admissible for any $c>0$.}
To see this, we make use of the construction of
$\rho_c\colon[0,\delta_c)\to[0,1)$ as above. As already explained,
to prove that $W_c$ is $M$-admissible it suffices to show that,
if $\delta_c <\infty$,
$\lim_{s\to \delta_c} \rho'_c(s)$ exists and is finite.

First, note that if $\delta_c\leq \delta_0$, then
$\lim_{s\to \delta_c}\rho'_c(s)
= \left({c}/{c_0}\right)^{1/d}\rho'_0(\delta_c)<\infty$,
so we may assume that $\delta_c>\delta_0$.
In this case, set 
$$\lambda_i = -k_i^{\delta_c}\rho_c(\delta_c), \,\,\,i\i=1,2,\ldots,n-1.$$
Then, the assumption that
$\frac{\partial_f}{\partial_{x_n}}\geq a>0$ implies that
$\lim_{x\to \infty} f(\lambda_1,\,\ldots,\,\lambda_{n-1},x) = \infty$,
hence~\eqref{eqfc} implies that
the function $\rho_c'(s)$ is uniformly bounded.

Next, we prove that the limit $\lim_{s\to \delta_c} \rho'_c(s)$ exists.
Let $(s_m)_{m\in \N}$ and $(t_m)_{m\in\N}$ be two sequences in
$(0,\delta_c)$ with $s_m,t_m\nearrow\delta_c$ and such that
$$\lim_{m\to \infty}\rho'_c(s_m) = \alpha_1\in [0,\infty),\quad
\lim_{m\to \infty}\rho'_c(t_m) = \alpha_2\in [0,\infty).$$
Then,~\eqref{eqfc} implies that
$$f(\lambda_1,\ldots,\lambda_{n-1},\alpha_1) =
c=f(\lambda_1,\ldots,\lambda_{n-1},\alpha_2),$$
from where we obtain that $\alpha_1 = \alpha_2$, since
$\frac{\partial f}{\partial x_n}>a>0$.
Thus, $\lim_{s\to \delta_c}\rho'(s)$ exists and is finite,
proving that $W_c$ is $M$-admissible. 
\end{remark}


\begin{example}
Given constants $a, b>0,$ let $f\colon\R^n\rightarrow\R$ be the symmetric homogeneous function
of degree $2$ defined by
\[
f=a\|A\|^2+bH_2.
\]

For any given $c>0,$ $W_c=f-c$ is clearly an elliptic Weingarten function.
Considering Lemma \ref{lem-graph} for $M=\q,$ $W=W_c,$ and $\mathscr F$ as in
\eqref{eq-isoparametricspheres},
the ODE \eqref{eq-edo} takes the form:
\begin{equation}\label{eq-edoalpha}
a(\rho'(s))^2+(n-1)b\cot_\epsilon(s)\rho(s)\rho'(s)+(\alpha\cot_\epsilon(s)\rho^2(s)-c)=0,
\end{equation}
where $\alpha:=(n-1)a+b{{n-1}\choose{2}}.$

Notice that, for any $a>0,$ we can choose  $b>0$ in such a way that
\begin{equation}\label{eq-alpha}
(n-1)^2b^2-4a\alpha=0.
\end{equation}
Indeed, this equality is equivalent to
the quadratic equation for $b$:
\begin{equation}\label{eq-b}
(n-1)b^2-2(n-2)ab-4a^2=0,
\end{equation}
which is easily seem to have a positive root.

Assuming \eqref{eq-alpha}, we can solve \eqref{eq-edoalpha} for $\rho',$ obtaining
\begin{equation}  \label{eq-linearode}
\rho'(s)=-\frac{(n-1)b}{2a}\cot_\epsilon(s)\rho(s)+\frac{\sqrt{ac}}{a}\cdot
\end{equation}
Setting $\beta:=\frac{(n-1)b}{2a}$ and $\mathcal R_\epsilon:=\mathcal R_{\q}$, as in~\eqref{eq-radius},
the standard method of resolution of linear  ODE's gives that
\[
\rho(s)=\frac{\sqrt{ac}}{a}\frac{\int_{0}^{s}\sin_\epsilon^{\beta}(u)du}{\sin_\epsilon^{\beta}(s)}, \,\,\, s\in(0,\mathcal R_\epsilon),
\]
is a (positive) solution to \eqref{eq-linearode}.
A direct computation yields
\begin{equation}  \label{eq-conditions1}
\lim_{s\rightarrow 0}\rho(s)=0 \quad\text{and}\quad \lim_{s\rightarrow 0}(\cot_\epsilon(s)\rho(s))=\frac{\sqrt{ac}}{a(\beta+1)}\cdot
\end{equation}
Also, from \eqref{eq-linearode} and the second equality in \eqref{eq-conditions1}, we have
\begin{equation}  \label{eq-conditions2}
\lim_{s\rightarrow 0}\rho'(s)=\frac{\sqrt{ac}}{a(\beta+1)}>0.
\end{equation}
Hence, $\rho'(s)>0$ for $s>0$ sufficiently small. In fact, one has $\rho'>0$ in
$(0,\mathcal R_{\epsilon}).$ Otherwise, there would exist $s_0>0$ such that $\rho'(s_0)=0$
and $\rho'(s)>0$ for all $s\in(0,s_0).$ Then, \eqref{eq-linearode} would give
\[
\rho''(s_0)=\frac{\beta\rho(s_0)}{\sin_\epsilon^2(s_0)}>0,
\]
i.e, $s_0$ would be a local minimum for $\rho,$ contradicting that
$\rho'>0$ in $s\in(0,s_0).$

Again by a direct computation, we have
\[
\lim_{s\rightarrow\mathcal R_{\epsilon}}\rho(s)=
\left\{
\begin{array}{lcl}
+\infty & \text{if} & \epsilon=0,\\[1ex]
\frac{\sqrt{ac}}{\beta} & \text{if} & \epsilon=-1,\\[1ex]
\sqrt{ac} \,I(\beta) & \text{if} & \epsilon=1,
\end{array}
\right.
\]
where $I(\beta):=\int_{0}^{\pi/2}\sin^\beta(s)ds\le 1.$ Thus, we
can define $\delta_c:=\rho^{-1}(1)<\mathcal R_{\epsilon}$ in any of the following occurrences:
\begin{itemize}[parsep=1ex]
\item $\epsilon=0.$

\item $\epsilon=-1$ and $\frac{\sqrt{ac}}{\beta}>1.$

\item $\epsilon=1$ and $\sqrt{ac}\,I(\beta)>1.$
\end{itemize}
In any of these cases, it follows from \eqref{eq-linearode} that $$\lim_{s\rightarrow\delta_c}\rho'(s)<+\infty.$$
Finally, we set $\delta_c=+\infty$ if $\epsilon=-1$ and $\frac{\sqrt{ac}}{\beta}\le 1.$

It follows from the above considerations that, for any $a>0$ and any $b>0$ satisfying \eqref{eq-b},
$W_c=f-c$ is $\q$-admissible for all $c>0.$ (However, for
$\epsilon=1,$ $\delta_c=\pi/2=\mathcal R_\epsilon$ if $\sqrt{ac}\,I(\beta)\le 1.$)
Furthermore, Theorem \ref{th-main}-(i) applies in the case $\delta_c<\mathcal R_{\epsilon},$ and
Theorem \ref{th-main}-(ii) applies in the case $\epsilon=-1$ and $\delta_c=+\infty.$ (Notice that, for
$\epsilon=0,$ the $W_c$-sphere obtained from Theorem \ref{th-main}-(i) is totally umbilical and has radius
$R={a(1+\beta)}/{\sqrt{ac}}.$)
\end{example}

\section{Symmetric Hypersurfaces of Constant Scalar Curvature in $\q\times\R.$} \label{sec-symmetricCSC}

In this section, we consider hypersurfaces of constant scalar curvature
(CSC) of $\q\times\R,$ $\epsilon\ne 0,$ which are \emph{symmetric}, meaning that they are
invariant by elliptic, parabolic or hyperbolic isometries. Any such
isometry is determined by a parallel family $\mathscr F$ of totally umbilical hypersurfaces of $\q,$
that is, geodesic spheres (elliptic), horospheres (parabolic) or equidistant hypersurfaces (hyperbolic).
In particular, hypersurfaces invariant by elliptic isometries are the rotational ones.

\begin{remark}
Regarding the notation $\q,$ we will assume from now on that  $\epsilon\ne 0,$  that is, $\q$ will refer
only to $\h^n$ or  $\s^n.$
\end{remark}

Let us start by showing that CSC hypersurfaces in $\q\times\R$ are
Weingarten hypersurfaces. Indeed, assuming $n\ge 3,$ consider a hypersurface $\Sigma$ of \,$\q\times\R$
and set $K(X,Y)$ for its sectional curvature determined by $X,Y\in T\Sigma.$
Given an orthonormal frame $\{X_1\,, \dots , X_n\}$ of principal directions
in $T\Sigma,$ it follows from Gauss equation \eqref{eq-gauss001} that:
\begin{equation} \label{eq-gauss}
K(X_i\,,X_j)=k_ik_j+\epsilon(1-\|T_{ij}\|^2), \,\, \, i\ne j\in\{1,\dots ,n\},
\end{equation}
where $k_i$ is the principal curvature in the direction $X_i$, and
$T_{ij}$ is the orthogonal projection of $T$ on the plane of $T\Sigma$ determined
by $X_i$ and $X_j$\,.

In this setting,
we have that the (non normalized) \emph{scalar curvature} $S$ of $\Sigma$ is:
\[
S=\sum_{i\ne j}K(X_i,X_j).
\]
Considering the equality $T=\partial_t-\Theta N$ and noticing that
\[
\sum_{i\ne j}\|T_{ij}\|^2=2(n-1)\|T\|^2=2(n-1)(1-\theta^2),
\]
we have from equation \eqref{eq-gauss} that
\begin{equation} \label{eq-scalar0}
S=2H_2+\epsilon (n-1)(2\theta^2+n-2).
\end{equation}

Therefore, given $c\in\R,$ defining $W_c:\R^n\times[0,1]\rightarrow\R$ by
\begin{equation} \label{eq-scalar}
W_{c}(k_1\,, \dots ,k_n,\theta^2)=2H_2(k_1\,, \dots ,k_n)+\epsilon (n-1)(2\theta^2+n-2)-c,
\end{equation}
we have that $W_c$ is elliptic Weingarten, and also that
a $W_c$-hypersurface of $\q\times\R$ has constant scalar curvature $S=c.$

Now, choose  a family
\[\mathscr F:=\{f_s:M_0\rightarrow \q\,;\, s\in I \}\]
of parallel  totally umbilical hypersurfaces of $\q,$
and write  $\alpha(s)$
for the principal curvature of $f_s\in\mathscr F.$
In this setting, the equalities \eqref{eq-principalcurvatures}
take the form:
\[
k_i=-\alpha\rho \,\,\,(i=1,\dots,n-1) \quad\text{and}\quad k_n=\rho',
\]
which gives
\[
2H_2(k_1,\dots, k_n)=-2(n-1)\alpha\rho\rho'+(n-1)(n-2)\rho^2\alpha^2.
\]
Since $\theta^2=1-\rho^2,$
we have that the equality $W_c(k_1,\dots,k_n)=0$ for
$k_1,\dots, k_n$ as above (that is,
the ODE \eqref{eq-edo} for  $W=W_c$)
is equivalent to
\begin{equation}\label{eq-edo3}
(n-1)(-2\alpha\rho\rho'+((n-2)\alpha^2-2\epsilon)\rho^2+n\epsilon)=c.
\end{equation}

Setting $\tau:=\rho^2,$ equation \eqref{eq-edo3} becomes
\begin{equation}\label{eq-edo4}
(n-1)(-\alpha\tau'+((n-2)\alpha^2-2\epsilon)\tau+n\epsilon)=c.
\end{equation}

Equation \eqref{eq-gauss} also gives that the sectional curvatures $K(X_i\,, X_j)$ of
an $(f_s,\phi)$-graph $\Sigma$ as in Lemma \ref{lem-graph} are given by (recall that $T$ is a principal direction of $\Sigma$):
\begin{equation}\label{eq-scalargraph}
 \begin{aligned}
 K(X_i\,,X_j) &=\alpha^2\rho^2+\epsilon \quad (i\ne j=1,\dots, n-1). \\
 K(X_i,X_n) &=-\alpha\rho\rho'+\epsilon(1-\rho^2) \quad (i=1,\dots, n-1). \\
 \end{aligned}
\end{equation}
In particular, $\Sigma$ has constant sectional curvature if and only if
$\rho$ satisfies
\begin{equation}\label{eq-edo10}
\alpha\rho\rho'+(\alpha^2+\epsilon)\rho^2=0.
\end{equation}

\begin{remark}
Throughout this section, $W_c$ will always denote
the Weingarten function defined in \eqref{eq-scalar}. We stress that
$\Sigma\subset\q\times\R$ has constant scalar curvature $c$ if and only if
$\Sigma$ is a $W_c$-hypersurface.
\end{remark}

\subsection{Rotational CSC hypersurfaces of $\q\times\R$}
Setting $\mathcal R_\epsilon:=\mathcal R_{\q}$, as in~\eqref{eq-radius},
we have that
any principal curvature of a geodesic sphere of $\q$ of radius
$s\in (0,\mathcal R_\epsilon)$ is
$\alpha(s)=-\cot_\epsilon s.$
In this case, equation \eqref{eq-edo4} takes the form
\begin{equation}\label{eq-tauode}
\tau'(s)=a(s)\tau(s)+b(s), \,\,\, s\in (0,\mathcal R_\epsilon),
\end{equation}
where the functions $a$ and $b$ are given by
\begin{equation}\label{eq-a&b}
a=-(n-2)\cot_\epsilon+2\epsilon\tan_\epsilon \qquad\text{and}\qquad b=\left(\frac{c}{n-1}-\epsilon n\right)\tan_\epsilon.
\end{equation}

The general solution
to \eqref{eq-tauode} is as follows. For fixed $s_0\in (0,\,+\infty)$ and
$\tau_0\in \R$,
\begin{equation} \label{eq-generalsolution}
\tau(s) =\frac{1}{\mu(s)}\left(\tau_0+\int_{s_0}^{s}{b(u)}{\mu(u)}du\right),
\end{equation}
where  $\mu(s)=\exp\left(-\int_{s_0}^sa(u)du\right)$ and $s\in (0,\,+\infty)$.
A direct computation gives
\begin{equation} \label{eq-tauu1}
\tau(s)=\mathfrak C_n\frac{\sin_\epsilon^n (s)-\sin_\epsilon^n(s_0)}{\sin_\epsilon^{n-2}(s)\cos_\epsilon^2(s)}+
\tau_0\frac{\sin_\epsilon^{n-2}(s_0)\cos_\epsilon^2(s_0)}{\sin_\epsilon^{n-2}(s)\cos_\epsilon^2(s)},
\end{equation}
where $\mathfrak C_n$ is the constant defined as
\begin{equation} \label{eq-cn}
\mathfrak{C}_n:={\frac{c-\epsilon n(n-1)}{n(n-1)}}\,\cdot
\end{equation}

Notice that there exists a solution $\tau$ which is defined at the singular
point $s_0=0$ and satisfies $\tau(0)=0.$ Namely,
\begin{equation} \label{eq-tauu}
\tau(s)=\mathfrak C_n\tan_\epsilon^2(s).
\end{equation}
In this case,
since $\tau$ is required to be a nonnegative function,
we must have $\mathfrak C_n\ge 0,$ that is
$c\ge\epsilon n(n-1).$ If $c=\epsilon n(n-1),$ the function
$\tau$, and so $\rho$, vanishes identically, and the corresponding
$(f_s,\phi)$-graph is nothing but a totally geodesic horizontal
hyperplane of $\q\times\R.$ So, we shall assume $\mathfrak C_n>0.$

If   $c>0$ and $\epsilon=-1,$ we have that $\mathfrak C_n>1,$ so that
$\arctan_\epsilon(1/\sqrt{\mathfrak{C}_n})$ is well defined for these values
of $c$ and $\epsilon$
(as well as for $c>0$ and $\epsilon=1$). On the other hand,
$\mathfrak C_n\le 1$ if $c\le 0$ and $\epsilon=-1.$
These facts and equality \eqref{eq-tauu} imply that, in any case,
$W_c$ is  $\q$-admissible and its
associated function $\rho$ is
\begin{equation} \label{eq-rhoconstantseccurvature}
\rho(s)=\sqrt{\mathfrak{C}_n}\tan_\epsilon(s), \,\, s\in[0,\delta),
\end{equation}
where $\delta>0$ is given by
\[
\delta:=\left\{
\begin{array}{lcl}
\arctan_\epsilon(1/\sqrt{\mathfrak{C}_n}) & \text{if} & c>0\\[1ex]
+\infty & \text{if} & c\le 0.
\end{array}
\right.
\]
(Notice that the condition (C4) in Definition \ref{def-weigartenpair} is easily checked.)
In particular, $\rho$ fulfills the conditions of
Theorem \ref{th-main}, where case (i) occurs if $c>0,$ and
case (ii) occurs if $c\le 0.$ Finally, a direct computation gives that
$\rho$ satisfies \eqref{eq-edo10}.

Summarizing, we have the following result.

\begin{theorem} \label{th-scalar}
Given $n\ge 3,$ for all $c>\epsilon n(n-1),$ there exists a
properly embedded strictly convex (and so elliptic) rotational
$W_c$-hypersurface $\Sigma$ in $\q\times\R$
which is necessarily of constant sectional curvature $K=c/(n(n-1))>\epsilon.$
Furthermore,
if $c>0,$ $\Sigma$ is a sphere as in Theorem \ref{th-main}-(i), and if
$c\le 0,$ $\Sigma$ is an entire graph as in Theorem \ref{th-main}-(ii).
\end{theorem}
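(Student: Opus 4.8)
The plan is to invoke Theorem \ref{th-main} with the specific admissible pair $(c,\ws)$ identified in the preceding discussion, and then to verify the two extra assertions -- constancy of the sectional curvature, and proper embeddedness -- that go beyond what Theorem \ref{th-main} gives directly. First I would recall that, by equation \eqref{eq-scalar}, a hypersurface of $\q\times\R$ has constant scalar curvature $c$ precisely when it is a $(c,\ws)$-hypersurface, so the problem is genuinely of Weingarten type with $W=\ws$, which is a general elliptic Weingarten function. For a rotational $(f_s,\phi)$-graph over concentric geodesic spheres of $\q$, Lemma \ref{lem-graph} reduces the CSC condition to the linear ODE \eqref{eq-tauode}--\eqref{eq-a&b} in $\tau=\rho^2$, whose explicit solution through the singular point $s=0$ with $\tau(0)=0$ is \eqref{eq-tauu}, i.e. $\rho(s)=\sqrt{\mathfrak C_n}\,\tan_\epsilon(s)$ with $\mathfrak C_n=(c-\epsilon n(n-1))/(n(n-1))$.

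Next I would check, exactly as in the paragraph preceding the statement, that the hypothesis $c>\epsilon n(n-1)$ is equivalent to $\mathfrak C_n>0$, so that $\rho$ as in \eqref{eq-rhoconstantseccurvature} is strictly positive and strictly increasing on its domain; and that $\rho<1$ holds on $[0,\delta)$ with $\delta=\arctan_\epsilon(1/\sqrt{\mathfrak C_n})<\mathcal R_\epsilon$ when $c>0$ (since then $\rho(\delta)=1$ and $\rho'(\delta)=\sqrt{\mathfrak C_n}\sec_\epsilon^2(\delta)>0$), while $\delta=+\infty$ when $c\le 0$ and $\epsilon=-1$ (using $\mathfrak C_n\le 1$, so $\sqrt{\mathfrak C_n}\tanh s<1$ for all $s$). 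Thus $(c,\ws)$ is $\q$-admissible, and conditions (C1)--(C3) hold, so Theorem \ref{th-main}-(i) yields an embedded strictly convex rotational $(c,\ws)$-sphere when $c>0$, and Theorem \ref{th-main}-(ii) yields an entire rotational strictly convex $(c,\ws)$-graph over $\h^n$ when $c\le 0$. In either case the resulting $\Sigma$ is properly embedded: in case (i) it is compact, hence proper; in case (ii) properness follows since $\Sigma$ is an entire graph and, by Theorem \ref{th-main}-(ii), its height function is proper (unbounded above, and it is a graph over all of $\h^n$).

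Finally, the constant sectional curvature claim: I would substitute $\rho(s)=\sqrt{\mathfrak C_n}\tan_\epsilon(s)$ and $\alpha(s)=-\cot_\epsilon s$ into the constant-sectional-curvature criterion \eqref{eq-edo10}, namely $\alpha\rho\rho'+(\alpha^2+\epsilon)\rho^2=0$; using $\rho\rho'=\tfrac12\tau'$ and $\tau=\mathfrak C_n\tan_\epsilon^2 s$, a short computation with the identities $\cot_\epsilon^2 s+\epsilon=\csc_\epsilon^2 s$ (Euclidean/hyperbolic Pythagoras) confirms the identity holds. Then equations \eqref{eq-scalargraph} give $K(X_i,X_j)=\alpha^2\rho^2+\epsilon$ for $i\ne j\le n-1$; evaluating, $\alpha^2\rho^2=\cot_\epsilon^2 s\cdot\mathfrak C_n\tan_\epsilon^2 s=\mathfrak C_n$, so $K=\mathfrak C_n+\epsilon=c/(n(n-1))$, a constant, and by \eqref{eq-edo10} the mixed curvatures $K(X_i,X_n)$ equal the same value; hence $\Sigma$ has constant sectional curvature $c/(n(n-1))$, which exceeds $\epsilon$ precisely because $\mathfrak C_n>0$. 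The only mild subtlety -- and the step most worth stating carefully rather than the hardest -- is reconciling the domain endpoint behaviour with Theorem \ref{th-main}: one must note that $\rho'(\delta)>0$ (not $\rho'(\delta)=0$) in case (i), which is immediate from the explicit formula, so that the ``gluing'' in Theorem \ref{th-main}-(i) indeed produces a sphere and not a degenerate object.
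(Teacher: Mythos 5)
Your proposal is correct and follows essentially the same route as the paper: the paper's ``proof'' is precisely the discussion preceding the statement, which derives the explicit solution $\rho(s)=\sqrt{\mathfrak C_n}\tan_\epsilon(s)$ of \eqref{eq-tauode}, checks the admissibility conditions (C1)--(C3) with $\delta=\arctan_\epsilon(1/\sqrt{\mathfrak C_n})$ or $\delta=+\infty$ according to the sign of $c$, invokes Theorem \ref{th-main}, and verifies \eqref{eq-edo10} to conclude constancy of the sectional curvature. Your extra remarks on $\rho'(\delta)>0$ and on properness are correct and only make explicit what the paper leaves implicit.
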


Let us consider equation \eqref{eq-tauode} again and look for solutions satisfying
$\tau(\lambda)=1$ and $\tau'(\lambda)<0$ for suitable values of $s_0=\lambda$\,.
In this case, we must have $a(\lambda)+b(\lambda)<0,$ which yields the inequality
\begin{equation} \label{eq-delta}
(n\mathfrak C_n+2\epsilon)\tan_\epsilon^2(\lambda)<(n-2).
\end{equation}

Assume $c>\epsilon n(n-1)$ and define $\delta_\epsilon(c)\in(0,\,\infty]$ as
$$\delta_\epsilon(c) =
{\rm sup}\{\lambda>0\mid
(n\mathfrak C_n+2\epsilon)\tan_\epsilon^2(\lambda)<(n-2)\}.$$
Then, we have:

\begin{theorem} \label{th-CSCannuli}
Given  $c>\epsilon n(n-1)$, there exists a one-parameter family
\[
\mathscr S\:=\{\Sigma(\lambda)\,;\,\lambda\in(0,\delta_\epsilon(c))\}
\]
of properly embedded rotational $W_c$-hypersurfaces
in $\q\times\R$ which are all homeomorphic to the $n$-annulus $\s^{n-1}\times\R.$
In addition, the following assertions hold:
\begin{itemize}[parsep=1ex]
\item[\rm i)] If either $\epsilon=-1$ and $\mathfrak C_n> 1,$ or $\epsilon=1,$
each $\Sigma(\lambda)\in\mathscr S$ is Delaunay-type, i.e., it
is periodic in the vertical direction, and has unduloids as its $T$-trajectories.

\item[\rm ii)] If $\epsilon=-1$ and $\mathfrak C_n\le 1$,
each hypersurface $\Sigma(\lambda)\in\mathscr S$ is symmetric with respect to $\q\times\{0\}$
and has unbounded height function.
\end{itemize}
\end{theorem}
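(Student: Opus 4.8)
The plan is to build, for each $\lambda\in(0,\delta_\epsilon(c))$, a single graph over an annular region of $\q$ via Lemma~\ref{lem-graph}, and then to obtain $\Sigma(\lambda)$ from it by repeated reflection across horizontal hyperplanes. First I would solve the linear ODE \eqref{eq-tauode} with the initial condition $\tau(\lambda)=1$, put $\rho_\lambda:=\sqrt\tau$, and note that, by inequality \eqref{eq-delta}, the defining property of $\delta_\epsilon(c)$ is precisely $a(\lambda)+b(\lambda)<0$, which --- since $\tau(\lambda)=1$ --- is the same as $\tau'(\lambda)<0$. Thus $\lambda$ is a strict local maximum of $\tau$ with $\tau<1$ just to its right, so $\rho_\lambda$ is well defined on a maximal interval $[\lambda,\mu)\subset[\lambda,\mathcal R_\epsilon)$ with $0<\rho_\lambda<1$ on $(\lambda,\mu)$. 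Feeding $\rho_\lambda$ into Lemma~\ref{lem-graph} (with $\alpha(s)=-\cot_\epsilon s$) then yields a rotational $(c,\ws)$-graph over $\{\lambda\le s<\mu\}\subset\q$ whose tangent spaces along $\{s=\lambda\}$ are vertical, since $\rho_\lambda(\lambda)=1$ makes $\phi'\to+\infty$ there.

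The technical heart is the qualitative behaviour of $\tau$ on $[\lambda,\mathcal R_\epsilon)$, read off from the closed form \eqref{eq-tauu1} (with $s_0=\lambda$, $\tau_0=1$) and \eqref{eq-cn}. Two facts come first: $\tau$ never vanishes --- because $c>\epsilon n(n-1)$ makes $b$ in \eqref{eq-a&b} strictly positive on $(0,\mathcal R_\epsilon)$, so a zero of $\tau$ would force $\tau'=b>0$ and hence $\tau<0$ just before it --- and, by maximality, either $\tau$ returns to the value $1$ at a first $\mu<\mathcal R_\epsilon$ with $\tau'(\mu)>0$ (the case $\tau'(\mu)=0$ being excluded, as it would place $\mu$ at $\delta_\epsilon(c)$), or $0<\tau<1$ on all of $[\lambda,\mathcal R_\epsilon)$. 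One then decides the alternative: if $\epsilon=1$, the denominator $\sin^{n-2}s\cos^2 s$ in \eqref{eq-tauu1} vanishes as $s\to\pi/2$ while the numerator stays positive, so $\tau\to+\infty$ and we are in the first case; if $\epsilon=-1$, the coefficients $a,b$ converge as $s\to+\infty$ and the limiting equation has equilibrium $\tau\equiv\mathfrak C_n$, toward which $\tau$ tends --- giving the first case when $\mathfrak C_n>1$ (that is, $c>0$) and the second when $\mathfrak C_n\le 1$ (that is, $c\le 0$), in which last case $\rho_\lambda\to\sqrt{\mathfrak C_n}>0$.

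It then remains to assemble $\Sigma(\lambda)$ and read off its properties. When $\tau$ returns to $1$, the graph is a compact $(c,\ws)$-graph over $\{\lambda\le s\le\mu\}$, with $\phi(\mu)$ finite --- by the very estimate used in the proof of Theorem~\ref{th-main}-(i), since $\rho_\lambda(\mu)=1$ and $\rho_\lambda'(\mu)>0$ --- and with vertical tangent spaces along both boundary spheres $\{s=\lambda\}$ and $\{s=\mu\}$. Gluing this block to its reflections across $\q\times\{0\}$ and $\q\times\{\phi(\mu)\}$ and iterating produces a properly embedded hypersurface $\Sigma(\lambda)$, smooth across the gluing sections because there $k_n=\rho_\lambda'$ stays bounded, invariant under the vertical translation by $2\phi(\mu)$, and homeomorphic to $\s^{n-1}\times\R$; its meridian $T$-trajectory is the curve $s\mapsto(s,\phi(s))$ together with its reflections, which is embedded (a graph over the radial coordinate) and oscillates between the radii $\lambda$ and $\mu$ --- an unduloid. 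Hence $\Sigma(\lambda)$ is Delaunay-type, which is assertion~(i). When $\tau$ does not return to $1$ (so $\epsilon=-1$, $c\le 0$), the graph is already an entire exterior graph over $\{s\ge\lambda\}\subset\h^n$ with $\phi(s)\to+\infty$ (the integrand in \eqref{eq-phi2} being bounded away from $0$, since $\rho_\lambda\to\sqrt{\mathfrak C_n}>0$), vertical along $\{s=\lambda\}$; reflecting it once across $\h^n\times\{0\}$ gives a properly embedded $\Sigma(\lambda)$, symmetric with respect to $\h^n\times\{0\}$, with unbounded height function and again homeomorphic to $\s^{n-1}\times\R$, which is assertion~(ii). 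In both cases $\Sigma(\lambda)$ is a $(c,\ws)$-hypersurface because reflections are isometries of $\q\times\R$ preserving \eqref{eq-generalweigarten}; embeddedness and properness follow from consecutive blocks lying in distinct closed slabs meeting only along the gluing hyperplanes; and distinct values of $\lambda$ give pairwise non-congruent surfaces (in case~(i) $\lambda$ is the neck radius, in case~(ii) the minimal radius), so we get the one-parameter family $\mathscr S$.

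I expect the main obstacle to be the trichotomy of the second paragraph: extracting from \eqref{eq-tauu1} exactly when $\tau$ comes back to $1$ --- in particular recognizing $\mathfrak C_n$ as the limiting equilibrium of \eqref{eq-tauode} for $\epsilon=-1$ --- together with the verification, in the returning case, that $\phi(\mu)<\infty$, which is what makes the stack of reflections tile $\R$ and renders $\Sigma(\lambda)$ genuinely periodic rather than merely accumulating onto a horizontal slice.
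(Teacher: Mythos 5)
Your proposal follows essentially the same route as the paper: solve \eqref{eq-tauode} with $\tau(\lambda)=1$, use the definition of $\delta_\epsilon(c)$ to get $\tau'(\lambda)<0$, split the analysis according to the limit of $\tau$ at $\mathcal R_\epsilon$ (namely $+\infty$ for $\epsilon=1$ and $\mathfrak C_n$ for $\epsilon=-1$), and assemble $\Sigma(\lambda)$ by reflecting the resulting $(f_s,\phi)$-graph across horizontal hyperplanes; your positivity argument for $\tau$ is a minor (valid) variant of the paper's use of \eqref{eq-generalsolution}. One step is justified incorrectly, though: you exclude $\tau'(\mu)=0$ at the return point $\mu$ on the grounds that it ``would place $\mu$ at $\delta_\epsilon(c)$,'' but that is not by itself a contradiction, since nothing prevents $\mu$ from equalling $\delta_\epsilon(c)>\lambda$. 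The paper closes this by a second-derivative computation: if $\tau(\bar\lambda)=1$ and $\tau'(\bar\lambda)=0$, then $\tau''(\bar\lambda)=a'(\bar\lambda)+b'(\bar\lambda)=\frac{n-2}{\sin_\epsilon^2(\bar\lambda)}+\frac{n\mathfrak C_n+2\epsilon}{\cos_\epsilon^2(\bar\lambda)}>0$, so $\bar\lambda$ would be a local minimum of $\tau$, contradicting $\tau<1=\tau(\bar\lambda)$ on $(\lambda,\bar\lambda)$; with this repair your argument matches the paper's.
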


\begin{proof}
Given $s_0=\lambda\in(0,\delta_\epsilon(c)),$ let $\tau$ be the solution
of \eqref{eq-tauode} satisfying $\tau(\lambda)=1.$ Then, from
the definition of $\delta_\epsilon(c)$, and equalities \eqref{eq-tauode} and \eqref{eq-a&b},
it follows that
$\tau'(\lambda)<0,$ so that $\tau$ is decreasing near $\lambda.$ Since
we are assuming $c>\epsilon n(n-1)$, we have that $\mathfrak C_n$ is positive, and
so is $b=n\mathfrak C_n\tan_\epsilon$\,. This, together with \eqref{eq-generalsolution},
implies that $\tau$ is positive on $(\lambda,\mathcal R_\epsilon).$
Also, from \eqref{eq-tauu}, one has
 \begin{equation} \label{eq-limittau01}
 \lim_{s\rightarrow\mathcal R_\epsilon}\tau(s)=\left\{
 \begin{array}{ccl}
 \mathfrak C_n & {\rm if} & \epsilon=-1 \\
 +\infty & {\rm if} & \epsilon=1.
 \end{array}
 \right.
 \end{equation}

Therefore, if $\epsilon=1$ or if $\epsilon=-1$ and $\mathfrak C_n>1,$
there exists
$\bar\lambda>\lambda$ such that
 \[
 0<\tau|_{(\lambda,\bar\lambda)}<1\quad\text{and}\quad \tau(\lambda)=\tau(\bar\lambda)=1.
 \]
 Let us see that $\tau'(\bar\lambda)\ne 0.$ Assuming otherwise, we have
 $a(\bar\lambda)=-b(\bar\lambda),$ so that
 $(n\mathfrak C_n+2\epsilon)\tan_\epsilon^2(\bar\lambda)=n-2.$ In particular,
 $n\mathfrak C_n+2\epsilon>0.$ However,
 \[
 \tau''(\bar\lambda)=a'(\bar\lambda)+b'(\bar\lambda)=\frac{n-2}{\sin_\epsilon^2(\bar\lambda)}+\frac{n\mathfrak C_n+2\epsilon}{\cos_\epsilon^2(\bar\lambda)}>0,
 \]
 which implies that $\bar\lambda$ is a local minimum of $\tau.$ This is a contradiction, since $\bar\lambda$ is
 a maximum for $\tau$ in $(\lambda,\bar\lambda].$ Thus, $\tau'(\bar\lambda)>0$.

 It follows from the above considerations that, setting $\tau_\lambda:=\tau|_{(\lambda,\bar\lambda)}$,
 we can proceed as in the proof of
 Theorem \ref{th-main} and conclude that
the $(f_s,\phi)$-graph $\Sigma'(\lambda)$ with $\rho$-function
$\rho=\sqrt{\tau_\lambda}$ is a bounded
$W_c$-hypersurface of $\q\times\R.$ Moreover, $\Sigma'(\lambda)$
is homeomorphic to
$\s^{n-1}\times (\lambda,\bar\lambda)$ and has boundary
\[
\partial\Sigma'(\lambda)=(S_\lambda(o)\times \{0\})\cup (S_{\bar\lambda}(o)\times \{\phi(\bar\lambda)\}),
\]
where $S_s(o)$ denotes the sphere of $\q$ with radius $s$ and center at $o$ (Fig. \ref{fig-W-annuli}-a).

Since $\rho(\lambda)=\rho(\bar\lambda)=1,$
the tangent spaces of $\Sigma'(\lambda)$ are vertical
along its boundary $\partial\Sigma'(\lambda).$
Moreover, $\Sigma'(\lambda)$ extends $C^2$-smoothly
to $\partial\Sigma'(\lambda),$
for $\tau'(\lambda)$ and $\tau'(\bar\lambda)$ (and so  $\rho'(\lambda)$ and $\rho'(\bar\lambda)$)
are both finite.
Therefore, we obtain a properly embedded rotational $W_c$-hypersurface
$\Sigma(\lambda)$ from $\Sigma'(\lambda)$ by
continuously reflecting it with respect to the horizontal hyperplanes
$\q\times\{k\phi(\bar\lambda)\}, \, k\in\Z.$ This proves (i).

To prove (ii), let us suppose that $\epsilon=-1$ and $\mathfrak C_n\le 1.$
In this case, \eqref{eq-limittau01} yields
\[
0<\tau|_{(\lambda,+\infty)}<1,
\]
so that the $(f_s,\phi)$-graph $\Sigma'(\lambda)$
determined by $\rho=\sqrt{\tau|_{(\lambda,+\infty)}}$ is a $W_c$-hyper\-surface
of $\h^n\times\R$ with boundary
$\partial\Sigma'(\lambda)=S_\lambda(o)\times\{0\}$ (Fig. \ref{fig-W-annuli}-b).
By reflecting $\Sigma'(\lambda)$ with respect to
$\q\times\{0\},$ as we did before, we obtain the embedded $W_c$-hypersurface
$\Sigma(\lambda)$ as stated.

It remains to show that the height function of $\Sigma(\lambda)$ is unbounded.
For that, we have just to observe that the infimum of $\tau$ in $[\lambda,+\infty)$
is positive, since $\tau$ itself is positive in this interval, and its limit as
$s\rightarrow+\infty$ is $\mathfrak C_n.$ So, the same is true for $\rho=\sqrt\tau.$
Therefore,
\[
\phi(s)=\int_{\lambda}^{s}\frac{\rho(u)}{\sqrt{1-\rho^2(u)}}du>
\int_{\lambda}^{s}\rho(u)du > \inf\rho|_{[\lambda,+\infty)}(s-\lambda),
\]
from which we conclude that $\phi$ is unbounded. This finishes the proof of (ii).
\end{proof}

\begin{figure}
 \centering
 \includegraphics[width=6.1cm]{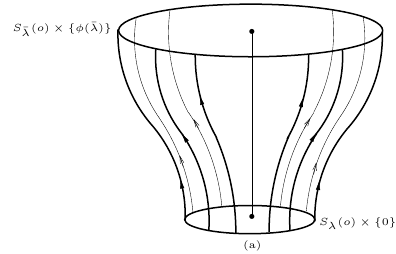}
 \hfill
 \includegraphics[width=4.4cm]{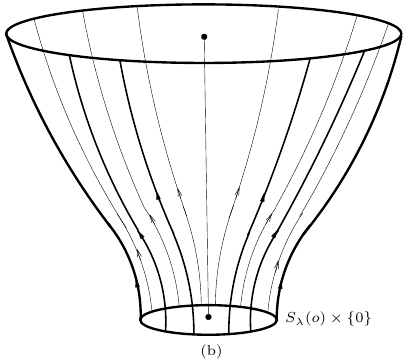}
 \caption{\small The two types of rotational $W_c$-annuli of $\q\times\R$, where the one on the
 right occurs only for $\epsilon=-1.$}
 \label{fig-W-annuli}
\end{figure}

\subsection{Translational CSC Hypersurfaces of $\h^n\times\R$}
Now, we consider hypersurfaces of constant negative scalar curvature
in $\h^n\times\R$ which are invariant by either parabolic or hyperbolic translations.
Recall that, in hyperbolic space $\h^n,$
parabolic translations are isometries which fix parallel families of horospheres,
whereas hyperbolic translations are those which fix parallel families of equidistant hypersurfaces
to a totally geodesic hyperplane of $\h^n.$
In the upper half-space model of $\h^n,$ horizontal Euclidean translations along fixed horizontal directions
are parabolic, and Euclidean homotheties
from the origin are hyperbolic.
It is easily seen that any of these isometries
extend to an isometry of $\h^n\times\R$ which fixes the factor $\R$ pointwise.

Starting with the parabolic case,
let us first consider $(f_s,\phi)$-graphs such that
\[
\mathscr F:=\{f_s:\R^{n-1}\rightarrow\h^n\,;\, s\in(-\infty,+\infty)\}
\]
is a parallel family of horospheres of $\h^n.$
Since the principal curvatures of any horosphere $\mathscr H_s:=f_s(\R^{n-1})$ are all equal to $1$,
equation \eqref{eq-edo4} becomes
\begin{equation} \label{eq-odetau007}
\tau'(s)=n\tau(s)-\frac{c+n(n-1)}{n-1}\,\cdot
\end{equation}

Consider a constant $c\in[-n(n-1),0)$ and write
\begin{equation} \label{eq-bc}
b_c=-\frac{c+n(n-1)}{n-1},
\end{equation}
so that
$0\le -b_c/n<1.$ In this setting, the constant function
\[
\tau_c(s)=-\frac{b_c}{n}\,, \,\,\, s\in(-\infty,+\infty),
\]
is a trivial solution of \eqref{eq-odetau007} satisfying
$0\le \tau_c<1.$ The function $\rho_c=\sqrt\tau_c$ is also constant, and
so it is a solution of \eqref{eq-edo10} (for $\alpha=1$ and $\epsilon=-1$).
Hence, defining
\[
\phi_c(s)=\int_{0}^{s}\frac{\rho_c(u)}{\sqrt{1-\rho_c^2(u)}}du=\frac{\rho_c}{\sqrt{1-\rho_c^2}}s, \,\,\, s\in(-\infty,+\infty),
\]
we have from Lemma \ref{lem-graph} that the $(f_s,\phi_c)$-graph $\Sigma_1(c)$
is entire an has constant sectional curvature $K=\rho_c^2-1=-\theta.$ In particular,
$\Sigma_1(c)$ has constant angle function, and
constant scalar curvature $c=-n(n-1)\theta.$
Also, from identities
\eqref{eq-principalcurvatures}, all principal curvatures of $\Sigma_1(c)$ are
non positive, so that $\Sigma_1(c)$ is convex (but not strictly convex,
since $k_n=\rho_c'=0$).

We add that, for $\rho_c=0,$
$\Sigma_1(c)$ is a horizontal hyperplane of $\q\times\R$ of constant scalar
curvature $c=-n(n-1).$ Moreover,
$\tau_c=-b_c/n\rightarrow 1$ as $c\rightarrow 0,$ which implies that
the angle function of $\Sigma_1(c)$ goes to $0$ as $c\rightarrow 0.$
Consequently, as $c\rightarrow 0,$ $\Sigma_1(c)$ converges
uniformly (on compact sets) to the  cylinder
$\mathscr H_0\times\R$ over the horosphere $\mathscr H_0=f_0(\R^{n-1})\subset\h^n.$
(Notice that, for all $c\in[-n(n-1),0),$ $\Sigma_1(c)\cap(\h^n\times\{0\})=\mathscr H_0.$)

A direct computation also gives that the nonconstant  function
\begin{equation} \label{eq-tau007}
\tau_c(s)=\left(1+\frac{b_c}{n} \right)e^{ns}-\frac{b_c}{n}\,, \,\,\, s\in (-\infty,0],
\end{equation}
with $b_c$ as in \eqref{eq-bc},
is the solution of \eqref{eq-odetau007} which satisfies:
\[
0<\tau_c(s)\le 1=\tau_c(0) \,\,\, \forall s\in (-\infty,0].
\]
Moreover, since $\tau_c'(0)>0,$ as in the proof of Theorem \ref{th-main}, we have that
\[
\phi_c(s)=\int_{0}^{s}\frac{\rho_c(u)}{\sqrt{1-\rho_c^2(u)}}du, \,\,\, s\in (-\infty,0],
\]
is well defined.

Assume $c>-n(n-1).$
Considering equality \eqref{eq-tau007}, one has
$$\rho_c:=\sqrt{\tau}\ge \sqrt{-b_c/n}>0.$$ Thus, for all $s\in (-\infty,0),$
\[
-\phi_c(s)=\int_{s}^{0}\frac{\rho_c(u)}{\sqrt{1-\rho_c^2(u)}}du\ge \int_{s}^{0}\rho_c(u)du\ge\sqrt{-b_c/n}(-s),
\]
which implies that $\phi_c$ is unbounded below. On the other hand, if $c=-n(n-1),$ then $b_c=0,$ which
yields $\rho_c(s)=e^{ns/2}.$ Hence, in this case,
\[
-\phi_c(s)=\int_{s}^{0}\frac{e^{nu/2}}{\sqrt{1-e^{nu}}}du= \frac{2}{n}\int_{e^{ns/2}}^{1}\frac{d\rho_c}{\sqrt{1-\rho_c^2}}=
\frac{2}{n}\left(\frac{\pi}{2}-\arcsin(e^{ns/2})\right),
\]
which gives that $\phi_c(s)>-\pi/n\, \forall s\in (-\infty,0],$ and that
$\phi_c(s)\rightarrow -\pi/n$ as $s\rightarrow-\infty.$

Therefore, the $(f_s,\phi_c)$-graph $\Sigma_2'(c)$ is a
$W_c$-hypersurface of $\h^n\times\R$ with boundary $\mathscr H_0\times\{0\}.$
Also, the height function of $\Sigma_2'(c)$ is unbounded below if $c>-n(n-1)$ and, if $c=-n(n-1),$
$\Sigma_2'(c)$ is contained in the slab $\h^n\times (-\pi/2, 0],$ being
asymptotic to the horizontal hyperplane
$P_{-\pi/2}:=\h^n\times\{-\pi/2\}.$
Notice that $\Sigma_2'(c)$ is nowhere convex, for its
principal curvatures are all negative, except for $k_n=\rho'>0.$
In addition, the tangent
spaces of $\Sigma_2'(c)$ along its boundary are all vertical, for $\phi_c'(s)\rightarrow +\infty$ as
$s\rightarrow 0,$ and $\Sigma_2'(c)$ extends $C^2$-smoothly to $\partial\Sigma_2'(c),$ for
$\rho_c'(0)>0.$

We conclude from the above considerations that the hypersurface $\Sigma_2(c)$ obtained by the union of the closure of
$\Sigma_2'(c)$ with its reflection with respect to the hyperplane $P_0:=\h^n\times\{0\}$ is a
properly embedded hypersurface of $\h^n\times\R$ which is invariant by parabolic translations, since
the vertical projections of its horizontal sections over $\h^n\times\{0\}$ are horospheres
all centered at the same point at infinity (Fig. \ref{fig-W-horographs}).

Finally, we observe that, since $b_c/n\rightarrow -1$ as $c\rightarrow 0,$ given
$\epsilon_0>0,$ there exists $c_0>0$ such that
$$\left|\frac{b_c}{n}+1\right|<\frac{\epsilon_0}{2}\,\,\, \forall c\in (0,c_0).$$
Hence, from \eqref{eq-tau007}, one has
\[
|\tau_c(s)-1|\le
\left|\frac{b_c}{n}+1\right|e^{ns}+\left|\frac{b_c}{n}+1\right|
<\epsilon_0 \,\, \forall s\in(-\infty ,0), \, \forall c\in(0,c_0),
\]
which implies that, as $c\rightarrow 0,$
$\tau_c$ converges uniformly  to the constant function $\tau=1$
on $(-\infty, 0].$ Since
$\partial\Sigma_2'(c)=\mathscr H_0\,\forall c\in [-n(n-1),0),$
likewise the hypersurfaces $\Sigma_1(c)$ above,
$\Sigma_2(c)$ converges
uniformly (on compact sets) to the  cylinder
$\mathscr H_0\times\R$ over the horosphere $\mathscr H_0=f_0(\R^{n-1})\subset\h^n$
as $c\rightarrow 0.$ Notice that $T=\partial_t$ is a principal direction
of $\mathscr H_0\times\R$ whose corresponding  principal curvature vanishes identically.
Besides, its other principal curvatures are all equal to $1,$ and the corresponding principal
directions are all tangent to $\mathscr H_0.$ Thus,
by Gauss equation \eqref{eq-gauss}, $\mathscr H_0\times\R$ is flat, i.e., has
vanishing sectional curvature everywhere.

Therefore, we have the following result.

\begin{theorem} \label{th-parabolicscalar}
Given $n\ge 3$ and $c\in[-n(n-1),0),$ there are two
properly embedded
$W_c$-hypersurfaces $\Sigma_1(c)$ and $\Sigma_2(c)$ in $\h^n\times\R$
which are homeomorphic to $\R^n$ and invariant by parabolic translations. In addition, they have
the following properties:
\begin{itemize}[parsep=1ex]
 \item[\rm i)] $\Sigma_1(c)$ is a convex (nowhere strictly convex) entire graph over $\h^n$ with constant sectional curvature
 $K=c/(n(n-1))\in[-1,0)$ and constant angle function. For $c=-n(n-1),$
 $\Sigma_1(c)$ is a totally geodesic horizontal hyperplane of $\h^n\times\R$ of constant sectional curvature $K=-1.$
 \item[\rm ii)] $\Sigma_2(c)$ is nowhere convex and symmetric with respect to $\h^n\times\{0\}$. If
 $c>-n(n-1),$ the height function of $\Sigma_2(c)$ is unbounded, and
 if $c=-n(n-1),$ $\Sigma_2(c)$ is contained
 in the slab $\h^n\times(-\pi/n,\pi/n)$, being asymptotic to the horizontal hyperplanes
 $P_{-\pi/n}$ and $P_{\pi/n}$.
\end{itemize}
Furthermore, as $c\rightarrow 0,$ both $\Sigma_1(c)$ and $\Sigma_2(c)$ converge uniformly
(on compact sets) to a flat cylinder $\mathscr H_0\times\R$ over a horosphere $\mathscr H_0$ of \,$\h^n.$
\end{theorem}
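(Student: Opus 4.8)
The plan is to realize $\Sigma_1(c)$ and $\Sigma_2(c)$ as $(f_s,\phi)$-graphs over the parallel family $\mathscr F = \{f_s:\mathbb R^{n-1}\to\mathbb H^n\}$ of horospheres, and then invoke Lemma~\ref{lem-graph} together with the explicit solutions of the reduced ODE~\eqref{eq-odetau007}. First I would recall that each horosphere $f_s$ has all principal curvatures equal to $1$, so that the general Weingarten equation~\eqref{eq-edo} for $W=\ws$ reduces, after the substitution $\tau=\rho^2$, exactly to the linear first-order ODE~\eqref{eq-odetau007}. The two hypersurfaces then arise from two distinct solutions of this ODE. For $\Sigma_1(c)$ one takes the constant solution $\tau\equiv -b/n \in[0,1)$; since $\rho$ is constant, $\phi$ is affine and the graph is entire over $\mathbb H^n$, and because $\rho$ solves~\eqref{eq-edo10} (for $\alpha=1$, $\epsilon=-1$), the identities~\eqref{eq-scalargraph} show $\Sigma_1(c)$ has constant sectional curvature $K=\rho^2-1 = c/(n(n-1))$; formula~\eqref{eq-principalcurvatures} gives $k_i=-\rho\le 0$ and $k_n=\rho'=0$, yielding convexity but not strict convexity. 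The boundary cases $c=-n(n-1)$ (where $\rho\equiv 0$, a totally geodesic horizontal hyperplane) and $c\to 0$ (where $\tau\to 1$, $\theta\to 0$, and the graph degenerates to the cylinder $\mathscr H_0\times\mathbb R$) are then read off directly.

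For $\Sigma_2(c)$ the relevant solution is the exponential one $\tau(s)=(1+b/n)e^{ns}-b/n$ on $(-\infty,0]$, which satisfies $0<\tau\le 1=\tau(0)$ and $\tau'(0)>0$. The strategy mirrors the proof of Theorem~\ref{th-main}: I would first check via~\eqref{eq-phi2} that $\phi(s)=\int_0^s \rho/\sqrt{1-\rho^2}$ is well-defined on $(-\infty,0]$ (the $\tau'(0)>0$ condition controls the integrable singularity at $s=0$, exactly as in the arc-length estimate in Theorem~\ref{th-main}(i)), obtaining a graph $\Sigma'$ over an exterior horoball region with boundary $\mathscr H_0\times\{0\}$ and vertical tangent planes along that boundary since $\phi'(s)\to+\infty$ as $s\to 0$. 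Then $\Sigma_2(c):=\mathrm{closure}\,\Sigma' \cup \mathrm{closure}\,\Sigma''$, where $\Sigma''$ is the reflection of $\Sigma'$ across $P_0=\mathbb H^n\times\{0\}$, is a properly embedded $(c,\ws)$-hypersurface. Invariance under parabolic translations is immediate because each horizontal section projects to a horosphere centered at the fixed point at infinity shared by all the $f_s$, and the parabolic translations fixing that point at infinity extend to isometries of $\mathbb H^n\times\mathbb R$ preserving $\Sigma_2(c)$. The homeomorphism type $\mathbb R^n$ and the fact that $\Sigma_2(c)$ is nowhere convex (all $k_i=-\rho<0$ except $k_n=\rho'>0$) follow from the same identities.

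The remaining assertions concern the height behavior and the $c\to 0$ limit. For the height function of $\Sigma_2(c)$: when $c>-n(n-1)$ one has $b<0$, hence $\rho\ge\sqrt{-b/n}>0$, and the crude bound $-\phi(s)=\int_s^0 \rho/\sqrt{1-\rho^2}\ge\int_s^0\rho\ge\sqrt{-b/n}\,(-s)$ shows $\phi$ is unbounded below, so after reflection the height function is unbounded. When $c=-n(n-1)$ one has $b=0$, $\rho(s)=e^{ns/2}$, and the integral $-\phi(s)=\frac{2}{n}\int_{e^{ns/2}}^1 d\rho/\sqrt{1-\rho^2}=\frac{2}{n}(\pi/2-\arcsin(e^{ns/2}))$ is computed exactly, giving $\phi(s)>-\pi/n$ with $\phi(s)\to-\pi/n$ as $s\to-\infty$; by symmetry $\Sigma_2(c)$ lies in the slab $\mathbb H^n\times(-\pi/n,\pi/n)$ and is asymptotic to $P_{\pm\pi/n}$. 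Finally, both solutions $\tau$ tend to the constant $1$ as $c\to 0$, and since $T=\partial_t$ is a principal direction of $\mathscr H_0\times\mathbb R$ with vanishing principal curvature, Gauss equation~\eqref{eq-gauss} shows the limit cylinder is flat. The only genuinely delicate point is the well-definedness and $C^1$-gluing at $s=0$ for $\Sigma_2(c)$, i.e. verifying that the improper integral defining $\phi$ converges and that the reflected piece matches along vertical tangent planes; but this is handled verbatim by the arc-length change-of-variables estimate already used in the proof of Theorem~\ref{th-main}, so the argument is essentially a bookkeeping assembly of the explicit ODE solutions and the reflection construction. I would write it out in that order: reduce to~\eqref{eq-odetau007}, treat $\Sigma_1(c)$, treat $\Sigma_2(c)$ (well-definedness, reflection, embeddedness, invariance), then the height estimates, then the $c\to 0$ limits.
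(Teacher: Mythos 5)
Your proposal is correct and follows essentially the same route as the paper: the paper's proof of this theorem is exactly the discussion preceding its statement, which reduces to the linear ODE \eqref{eq-odetau007}, takes the constant solution for $\Sigma_1(c)$ and the exponential solution $\tau(s)=(1+b/n)e^{ns}-b/n$ for $\Sigma_2(c)$, performs the same reflection and the same two height estimates (including the exact $\arcsin$ computation for $c=-n(n-1)$), and concludes the $c\to 0$ limit in the same way. No substantive differences.
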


\begin{figure}
 \centering
 \includegraphics[width=6.1cm]{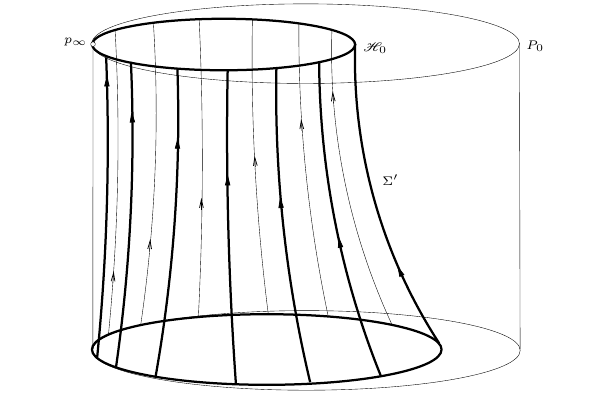}
 \hfill
 \includegraphics[width=6.2cm]{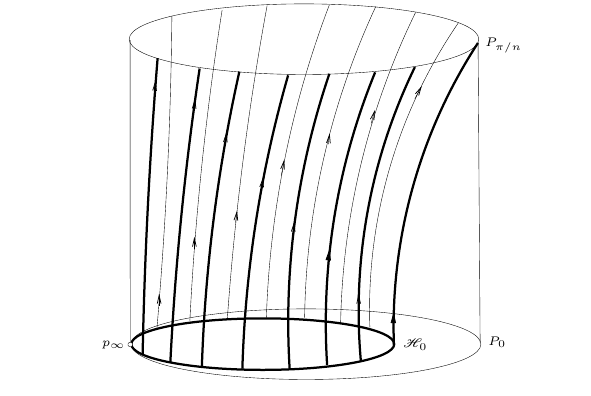}
 \caption{\small Half of the CSC Weingarten hypersurfaces of Theorem \ref{th-parabolicscalar}-(ii),
 where the one on the left is supposed to be unbounded below. (Notice we are considering the Poincaré ball
 model of $\h^n.$)}
 \label{fig-W-horographs}
\end{figure}

Now, we proceed to construct properly embedded $W_c$-hypersurfaces in $\h^n\times\R$ which are invariant
by hyperbolic translations. On that account,
consider an isometric immersion $f_0:\R^{n-1}\rightarrow\h^n$ such that
$f_0(\R^{n-1})$ is a totally geodesic hyperplane of $\h^n$ and let
\[
\mathscr F:=\{f_s:\R^{n-1}\rightarrow\h^n\,;\, s\in (-\infty,+\infty)\}
\]
be the parallel family of equidistant hypersurfaces to $f_0$
in $\h^n.$ Each $f_s$ is totally umbilical with
principal curvatures $\alpha(s)=-\tanh(s).$ Hence, in this setting, and
for $s>0,$ equation \eqref{eq-edo4} takes the form
\begin{equation}\label{eq-tauodehyperbolic}
\tau'(s)=a(s)\tau(s)+b(s), \,\,\, s\in (0,+\infty),
\end{equation}
where the functions $a$ and $b$ are given by
\begin{equation}\label{eq-a&bhyperbolic}
a(s)=-2\coth(s)-(n-2)\tanh(s) \quad\text{and}\quad b(s)=n\mathfrak C_n\coth(s),
\end{equation}
being $\mathfrak C_n$ the constant defined in \eqref{eq-cn} for $\epsilon=-1,$ i.e.,
\[
\mathfrak C_n=\frac{c+n(n-1)}{n(n-1)}\cdot
\]

As for the construction of the hypersurfaces in Theorem \ref{th-CSCannuli}, we look at
solutions $\tau$ of \eqref{eq-tauodehyperbolic} satisfying $\tau(\lambda)=1$ and $\tau'(\lambda)=a(\lambda)+b(\lambda)<0$ for
suitable values of $s_0=\lambda.$ This last inequality is equivalent to
\begin{equation} \label{eq-inequalityhyperbolic}
(n\mathfrak C_n-2)\coth^2(\lambda)<n-2,
\end{equation}
which is valid for all $\lambda>0$ if $n\mathfrak C_n\le 2.$ Otherwise, assuming
$-n(n-1)\le c<0,$ we have that $(n-2)/(n\mathfrak C_n-2)>1.$
Therefore, setting
$$\lambda_0=\lambda_0(c)={\rm arctanh}\sqrt{\mathfrak C_n},$$
and defining the interval $I_c$ as
\[
I_c:=\left\{
\begin{array}{lcl}
(0,+\infty)         & \text{if} & n\mathfrak C_n\le 2\\[1ex]
[\lambda_0,+\infty) & \text{if} & n\mathfrak C_n > 2,
\end{array}
\right.
\]
we have that the inequality
\eqref{eq-inequalityhyperbolic} holds for any
$\lambda\in I_c,$ so that $\tau'(\lambda)<0$
if $\tau(\lambda)=1.$

With this notation, we have the following result.

\begin{theorem} \label{th-hyperbolicscalar}
For any $c\in[-n(n-1),0)$, there exists a one-parameter family
\[
\mathscr S\:=\{\Sigma(\lambda)\,;\,\lambda\in I_c\}
\]
of properly embedded nowhere convex $W_c$-hypersurfaces
in $\h^n\times\R$ which are homeomorphic
to $\R^n$ and invariant by hyperbolic translations. Each
$\Sigma(\lambda)\in\mathscr S$ is
symmetric with respect to $\h^n\times\{0\}$ and has the
following additional properties:
\begin{itemize}[parsep=1ex]
 \item[\rm i)] If $c>-n(n-1),$ the height function of $\Sigma$ is unbounded above and below,
 and $\Sigma(\lambda_0)$ has constant sectional curvature $K=\frac{c}{n(n-1)}\in(-1,0).$

 \item[\rm ii)] If $c=-n(n-1),$ $\Sigma$ is contained
in a slab $\h^n\times(-\sigma\pi/n,\sigma\pi/n),$ $\sigma\in(0,1],$
and is asymptotic to both horizontal hyperplanes
$P_{-\sigma\pi/n}$ and $P_{\sigma\pi/n}$\,.
\end{itemize}
\end{theorem}
\begin{proof}
Given $\lambda\in I_c,$ we have from
\eqref{eq-generalsolution} that the solution $\tau_\lambda$
of \eqref{eq-tauodehyperbolic} satisfying $\tau_\lambda(\lambda)=1$ is given by
\begin{equation}\label{eq-solutionedohyperbolic}
\tau_\lambda(s)=\mathfrak C_n\frac{\cosh^n(s)-\cosh^n(\lambda)}{\cosh^{n-2}(s)\sinh^2(s)}+\frac{\cosh^{n-2}(\lambda)\sinh^2(\lambda)}{\cosh^{n-2}(s)\sinh^2(s)}\,,
\quad s\in [\lambda, +\infty).
\end{equation}

By the definition of $I_c,$ $\tau_\lambda$ is decreasing near $\lambda.$
Also, from \eqref{eq-solutionedohyperbolic}, $\tau_\lambda$ is positive in $[\lambda,+\infty)$.
Let us see that $\tau_\lambda$ has no critical points in this interval. Assuming otherwise,
let $s_1>\lambda$ be such that $\tau_\lambda'(s_1)=0.$ Then, from \eqref{eq-tauodehyperbolic},
we have $\tau_\lambda(s_1)=-b(s_1)/a(s_1),$ which gives
\[
2\tau_\lambda(s_1)-n\mathfrak C_n=n\mathfrak C_n\left(\frac{2\coth(s_1)}{2\coth(s_1)+(n-2)\coth(s_1)}-1 \right)<0.
\]
Therefore,
\[
\tau_\lambda''(s_1)=a'(s_1)\tau_\lambda(s_1)+b'(s_1)=\frac{2\tau_\lambda(s_1)-n\mathfrak C_n}{\sinh^2(s)}-\frac{n-2}{\cosh^2(s)}<0,
\]
which implies that any critical point of $\tau_\lambda$ in $[\lambda,+\infty)$ is a maximum. However, since
$\tau_\lambda$ is decreasing near $\lambda,$ a local maximum point of $\tau_\lambda$ should be preceded by a minimum point.
Thus, $\tau_\lambda$ has no critical points, so that it is decreasing in $[\lambda,+\infty).$

Therefore, the function $\rho_\lambda=\sqrt{\tau_\lambda}$ is well defined in $[\lambda,+\infty)$ and satisfies:
\[
0<\rho_\lambda(s)\le 1=\rho_\lambda(\lambda) \quad\text{and}\quad \rho_\lambda'(s)<0 \,\,\, \forall s\in[\lambda,+\infty).
\]
Hence, the associated $\phi$-function
\[
\phi_\lambda(s)=\int_{\lambda}^{s}\frac{\rho_\lambda(u)}{\sqrt{1-\rho_\lambda^2(u)}}du, \,\,\, s\in [\lambda,+\infty),
\]
is well defined and satisfies $\phi_\lambda'(s)\rightarrow +\infty$ as $s\rightarrow\lambda.$ This, as before,
gives that the corresponding $(f_s,\phi)$-graph $\Sigma'(\lambda)$ is a $W_c$-hypersurface of
$\h^n\times\R$ with boundary $f_{\lambda}(\R^{n-1})\times\{0\}.$
So, proceeding as in the previous proofs, we obtain a properly embedded
$W_c$-hypersurface $\Sigma(\lambda)$ in $\h^n\times\R$
by reflecting $\Sigma'(\lambda)$ with respect to the horizontal hyperplane $\h^n\times\{0\}.$

To prove the assertions (i) and (ii), let us first observe that
\eqref{eq-solutionedohyperbolic} yields
\begin{equation}\label{eq-limittau}
 \lim_{s\rightarrow+\infty}\rho_\lambda(s)=\sqrt{\mathfrak C_n}\,\cdot
\end{equation}
Hence, if $\mathfrak C_n>0,$
\[
\phi_\lambda(s)=\int_{\lambda}^{s}\frac{\rho_\lambda(u)}{\sqrt{1-\rho_\lambda^2(u)}}du\ge \int_{\lambda}^{s}\rho_\lambda(u)du\ge \sqrt{\mathfrak C_n}(s-\lambda),
\]
which implies that $\phi_\lambda$ is unbounded.
Besides, setting $c=Kn(n-1),$
we have $\lambda_0={\rm arctanh}\sqrt{1+K}.$ In this case, it
is easily checked that
\[
\rho_{\lambda_0}(s)=(1+K)\coth(s), \,\,\, s\in[\lambda_0,+\infty).
\]
However, $\rho_{\lambda_0}$ is also a solution of \eqref{eq-edo10} (for $\alpha(s)=\tanh(s)$),
which implies that $\Sigma(\lambda_0)$ has constant sectional curvature $K.$
This proves (i).

If $\mathfrak C_n=0,$ we have that $\tau_\lambda'=a\tau_\lambda,$ which implies that
$2\rho_\lambda'=a\rho_\lambda.$ Thus, observing that $\sup (-1/a)=1/n,$ we have
\[
\phi_\lambda(s)=
\int_{\lambda}^{s}\frac{2\rho_\lambda'(u)}{a(u)\sqrt{1-\rho_\lambda^2(u)}}du\le
\frac{2}{n}\int_{\rho_\lambda(s)}^{1}\frac{d\rho_\lambda}{\sqrt{1-\rho_\lambda^2}}=
\frac{2}{n}\left(\frac{\pi}{2}-\arcsin(\rho_\lambda(s))\right),
\]
which implies that $\phi_\lambda(s)<\pi/n.$ In particular, there exists
$\sigma\in(0,1]$ such that
\[
\sup\phi_\lambda=\sigma\frac{\pi}{n},
\]
which shows (ii) and concludes the proof.
\end{proof}

\begin{figure}
 \centering
 \includegraphics[width=4.5cm]{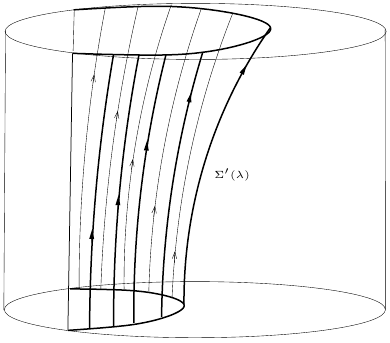}
 \hfill
 \includegraphics[width=5.5cm]{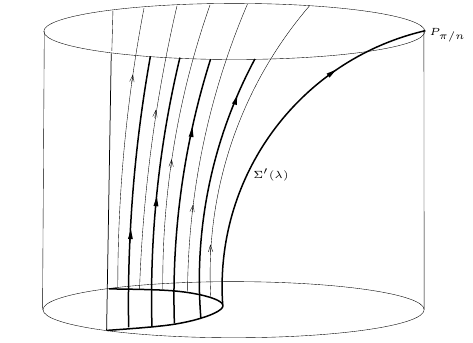}
 \caption{\small A piece of the $W_c$-hypersurface of Theorem \ref{th-hyperbolicscalar} (case (i) on the left, case (ii) on the right).}
 \label{fig-equidistantgraphs}
\end{figure}

\section{Uniqueness of Rotational Elliptic Weingarten Spheres} \label{sec-uniqueness}

As we have pointed out in Remark \ref{rem-MP}, the Maximum Principle
applies to
elliptic Weingarten hypersurfaces of $M\times\R.$ This fact, together with
the main results in \cite{delima} (see also \cite{esp-rosen, oliveira-schweitzer}),
allows us to apply the Alexandrov reflection method to provide
uniqueness results for the rotational
elliptic Weingarten spheres of $\q\times\R$ ($\epsilon\ne 0, n\ge 3$)
we constructed in Section \ref{sec-W-rotational}. Similar results
were obtained in \cite{delima-manfio-santos} for hypersurfaces of constant
higher order mean curvatures.

\begin{definition}
Let $P_0$ be a totally geodesic hypersurface of either hyperbolic space $\h^n$ or
an open hemisphere $\s_+^n$ of \,$\s^n.$ We call the hypersurface $P:=P_0\times\R$
a \emph{vertical hyperplane} of $\q\times\R.$
\end{definition}

Regarding the spherical part of the above definition, we remark that
the Alexandrov method is not
effective for vertical hyperplanes over the whole sphere. However,
it works well for vertical hyperplanes over open hemispheres
(cf. \cite{abresch-rosenberg}, pg. 144).

\begin{theorem}[Jellett--Liebmann-type theorem]\label{th-uniqueness}
For $n\ge3,$ let $\Sigma$ be a {compact} connected
elliptic Weingarten hypersurface of
\,$\mathbb Q_{\epsilon}^n\times\R.$
Then, $\Sigma$ is an {embedded} rotational sphere.
\end{theorem}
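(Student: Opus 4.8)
The plan is to run the Alexandrov reflection method with respect to the foliation of $\q\times\R$ by horizontal hyperplanes $P_t=\q\times\{t\}$, and then again with respect to a pencil of vertical hyperplanes through a fixed vertical axis, in order to force $\Sigma$ to be a bi-graph symmetric across some $P_{t_0}$ and rotationally symmetric about a vertical geodesic. The two ingredients that make this work are already available: by Remark \ref{rem-MP}, the Hopf Maximum Principle (interior and boundary versions) applies to strictly convex elliptic Weingarten hypersurfaces, since the equation $W(k_1,\dots,k_n,\theta^2)=c$ is, along the convex cone $\Gamma_+$, a uniformly elliptic equation for the graph function wherever two such hypersurfaces are tangent and locally graphical with the same normal; and, by strict convexity together with compactness and connectedness, $\Sigma$ bounds a convex body, hence is embedded and diffeomorphic to $\s^n$. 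The strict convexity also guarantees that the height function $\xi$ is a Morse function with exactly one maximum and one minimum, so the ``top'' and ``bottom'' caps needed to start the reflection are genuine graphs over domains in $\q$.

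First I would fix the unit vertical vector field $\partial_t$ and slide horizontal hyperplanes $P_t$ down from $t=+\infty$. For $t$ slightly below the maximal height, the part of $\Sigma$ above $P_t$ is a graph; reflecting it across $P_t$ and comparing with the part below, the Maximum Principle (interior comparison where the two pieces are tangent, boundary-point comparison where they meet $P_t$ tangentially) prevents any first contact until the reflected cap ceases to be a graph or becomes internally tangent to $\Sigma$. Standard Alexandrov reasoning then yields a critical height $t_0$ at which $\Sigma$ is symmetric under reflection across $P_{t_0}$; in particular $\Sigma$ is a vertical bi-graph over a convex domain $\Omega\subset\q$, each half being an elliptic Weingarten graph with vertical tangent planes along $\partial\Omega$. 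This is precisely the structure of the rotational spheres built in Theorem \ref{th-main}-(i), but we do not yet know $\Omega$ is a geodesic ball or that the graph is radial.

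To upgrade to rotational symmetry I would use vertical hyperplanes $P=P_0\times\R$. Here the role of the technical input from \cite{delima} is decisive: Alexandrov reflection with respect to vertical hyperplanes requires that a compact hypersurface cannot be ``asymptotic'' to or trapped against such a hyperplane in a way that destroys the first-contact argument, and the main results of \cite{delima} (valid for $\h^n\times\R$ and, via open hemispheres, for $\s^n\times\R$; this is why the vertical hyperplanes are defined over $\h^n$ or over $\s^n_+$, cf.\ the remark after the definition) provide exactly the needed control so that the reflection across any vertical hyperplane disjoint from $\Sigma$ can be brought into contact and a plane of symmetry obtained. Applying this to every direction in a totally geodesic $\s^{n-2}$'s worth of vertical hyperplanes through a common vertical axis $\ell$ — the axis being located by intersecting two such symmetry hyperplanes — shows $\Sigma$ is invariant under the full rotation group fixing $\ell$, i.e.\ rotational. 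Combined with the horizontal symmetry across $P_{t_0}$ and embeddedness, this gives that $\Sigma$ is an embedded rotational sphere, as claimed.

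The main obstacle I expect is the vertical Alexandrov step: unlike in space forms, $\q\times\R$ has no isometry exchanging the two sides of a generic vertical hyperplane globally, and a compact convex hypersurface could a priori fail the reflection comparison if it ``hugs'' a vertical hyperplane near infinity in $\s^n$ or near the asymptotic boundary of $\h^n$. This is exactly the difficulty that forces the restriction to vertical hyperplanes over hemispheres in the spherical case and that is resolved by invoking \cite{delima}; making the contact argument rigorous — producing a last vertical hyperplane $P$ in a given pencil that still touches $\Sigma$, checking that first contact is either interior or a boundary tangency so the Maximum Principle fires, and concluding $P$ is a mirror — is the crux of the proof. Once a single vertical mirror plane is found in each direction, the remaining group-theoretic argument that ``enough reflections generate the rotation group'' is routine, as is the final identification of $\Sigma$ with one of the spheres of Theorem \ref{th-main}-(i).
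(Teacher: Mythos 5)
Your overall architecture matches the paper's: establish embeddedness and spherical topology via \cite{delima}, obtain a horizontal mirror hyperplane $P_{t_0}$ by Alexandrov reflection with respect to horizontal slices, then upgrade to rotational symmetry via Alexandrov reflection across vertical hyperplanes, confining to an open hemisphere when $\epsilon=1$. For $\epsilon=-1$ your argument essentially coincides with the paper's, which indeed simply performs vertical reflections directly.

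There is, however, a genuine gap in the spherical case, and the obstacle itself is misidentified. The difficulty is not that a compact hypersurface might ``hug'' a vertical hyperplane near infinity: $\s^n$ has no infinity, and in $\h^n\times\R$ a compact hypersurface lies a bounded distance from the asymptotic boundary, so no such phenomenon can occur there either. The actual obstruction in $\s^n\times\R$ is that a totally geodesic hyperplane of $\s^n$ does not separate $\s^n$ into two half-spaces, so the vertical reflection method cannot be run globally; one must first show $\Sigma\subset\s_+^n\times\R$ for some open hemisphere $\s_+^n$. You attribute this confinement to \cite{delima}, but \cite{delima} does not supply it. The paper's argument is: after the horizontal Alexandrov step, $\Sigma_0:=\Sigma\cap(\s^n\times\{t_0\})$ is a closed hypersurface of $\s^n$ whose second fundamental form is positive definite by \cite[Lemma 1]{delima}; in particular $\Sigma_0$ is not totally geodesic, so the theorem of do Carmo and Warner \cite[Theorem 1]{docarmo-warner} places it inside an open hemisphere $\s_+^n$; finally one must still argue that the domain $\pi(\Sigma)$ itself (not just its boundary $\Sigma_0$) lies in $\s_+^n$, which the paper does by noting that the alternative $\s^n\setminus\pi(\Sigma)\subset\s_+^n$ would force a concave point on $\Sigma$, contradicting strict convexity. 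This chain — convexity of the horizontal section from \cite{delima}, then do Carmo--Warner, then the concave-point dichotomy — is the missing idea; without it, the vertical Alexandrov step for $\epsilon=1$ is not justified. Your proposal names the right tools in broad strokes but does not contain the step that actually makes them applicable on the sphere.
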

\begin{proof}
Since $\Sigma$ is strictly convex and compact,
\cite[Theorems 1 and 2]{delima} apply and give that
$\Sigma$ is embedded and homeomorphic to $\s^n$.
In this way, we can perform Alexandrov reflections on $\Sigma$ with respect to horizontal hyperplanes
$P_t:=\q\times\{t\}$ coming down from above $\Sigma.$ Then,
a standard argument using the maximum principle shows that
$\Sigma$ is symmetric with respect to some horizontal plane $P_{t_0}$.

For $\epsilon=-1,$ we can proceed as above by performing Alexandrov reflections on $\Sigma$
with respect to vertical hyperplanes of $\h^n\times\R$ to conclude that it has
vertical symmetries in all directions. Therefore, $\Sigma$ is rotational.

For $\epsilon=1,$ assuming $t_0=0$ and identifying
$\s^n\times\{0\}$ with $\s^n,$ we have that
$\Sigma_0:=\Sigma\cap\s^n$ is the boundary of
$\pi(\Sigma)$ in $\s^n$ and that $\Sigma\setminus \Sigma_0$
has two connected components, each of which being a graph over
${\rm int}(\pi(\Sigma))$.
By \cite[Lemma 1]{delima}, the second fundamental form of $\Sigma_{0}$,
as a hypersurface of \,$\s^n$, is
positive definite. In particular, $\Sigma_{0}$ is not totally geodesic in \,$\s^n$. Thus,
by \cite[Theorem 1]{docarmo-warner}, $\Sigma_{0}$ is contained in an open hemisphere $\s_+^n$
of $\s^n$, which implies that the same is true for $\pi(\Sigma).$
Indeed, the other option would be
$\s^n\setminus \pi(\Sigma)\subset \s_+^n$, in which case $\Sigma$ would
have at least one concave point, a contradiction with the assumption
it is strictly convex.
Hence, $\Sigma\subset \s_+^n\times\R$, and again
we may apply Alexandrov reflections on the vertical hyperplanes in
$\s_+^n\times\R$ to deduce that  $\Sigma$ is rotational.
\end{proof}

Let us see now that the compactness hypothesis in Theorem \ref{th-uniqueness}
can be replaced by completeness if we add conditions on the height function
$\xi$ of $\Sigma$
and on its second fundamental form. This is accomplished by means of the following
general height estimate, obtained in \cite{delima-manfio-santos}.

\begin{lemma}\cite[Proposition 3]{delima-manfio-santos}\label{lem-heightestimate}
Consider an arbitrary Riemannian manifold \,$M,$ and let $\Sigma\subset M\times\R$ be a
compact vertical graph of a nonnegative function defined on a domain $\Omega\subset M\times\{0\}.$
Assume $\Sigma$ strictly convex up to $\partial\Sigma\subset M\times\{0\}.$ Under these conditions,
the following height estimate holds:
\begin{equation} \label{eq-delta}
\xi(x)\le\frac{1}{\inf_\Sigma k_0} \,\,\, \forall x\in\Sigma,
\end{equation}
where $k_0$ is the least principal curvature function of $\Sigma$.
\end{lemma}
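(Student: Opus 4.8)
The plan is to use a tangency/comparison argument with the family of strictly convex rotational $(c,W)$-spheres constructed in Section \ref{sec-W-rotational} — or, more directly here, with the explicit CSC spheres of Theorem \ref{th-scalar} — playing the role of vertical barriers. Since $\Sigma$ is compact and strictly convex with $\partial\Sigma\subset M\times\{0\}$, it is a vertical graph over a closed domain $\Omega\subset M\times\{0\}$ of a nonnegative function $u$, and its least principal curvature satisfies $k_0\ge a:=\inf_\Sigma k_0>0$. Fix $x_0\in\Sigma$ realizing the maximal height $h:=\xi(x_0)=\max_\Sigma u$; at $x_0$ the tangent plane is horizontal, so $x_0$ is an interior point of the graph over $\Omega$.

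First I would introduce the comparison hypersurface. Let $\mathcal S_a$ be a strictly convex rotational graph over a geodesic ball of $M$ whose least principal curvature along its ``equator'' equals $a$; concretely, one takes an $(f_s,\phi)$-graph as in Section \ref{sec-graphs} whose $\rho$-function solves $\rho(0)=0$, $\rho'>0$, and which reaches the vertical slope $\rho=1$ with the geometric normalization that the horizontal principal curvatures there are $\ge a$ (for $M=\q$ one may simply use $\rho(s)=\sqrt{\mathfrak C_n}\tan_\epsilon s$ from \eqref{eq-rhoconstantseccurvature}, rescaled so that the boundary principal curvature is exactly $a$). Its total height $H(a)$ is finite — precisely the kind of estimate carried out in the proof of Theorem \ref{th-main}(i), where $\phi$ is shown bounded because $\rho'\ge$ const near the endpoint forces $\int \rho/\sqrt{1-\rho^2}\,ds\le \frac{\pi}{2a}$. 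This already pins down $H(a)=1/a$ in the model case and, more robustly, gives $H(a)\le 1/a$ via the same $\arcsin$ estimate once one checks the normalization of the principal curvatures against $a$.

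Next comes the sliding argument. Place a congruent copy $\mathcal S_a^{(v)}$ of $\mathcal S_a$ with its ``north pole'' at height $v$ above some point of $\Omega$, and slide it downward (decreasing $v$) until it first touches $\Sigma$. Because $\partial\Sigma$ lies in $M\times\{0\}$ while the bottom rim of $\mathcal S_a^{(v)}$ is at height $v-H(a)$, one arranges the first contact to occur at an interior point $p$ of both hypersurfaces (boundary contact is excluded by choosing the horizontal position of $\mathcal S_a^{(v)}$ near $x_0$ and using that the rim of $\mathcal S_a^{(v)}$ stays strictly above $M\times\{0\}$ until the relevant moment). At $p$ the two surfaces are tangent with $\Sigma$ locally below $\mathcal S_a^{(v)}$, and comparing second fundamental forms gives that the least principal curvature of $\mathcal S_a^{(v)}$ at $p$ is $\ge$ that of $\Sigma$ at $p$, hence $\ge a$; by construction this forces $p$ to lie on or above the equator of $\mathcal S_a^{(v)}$, so $\xi(p)\ge v-H(a)$. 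Taking $v$ equal to the first-contact value and noting $v\le h + H(a)$ is automatic while $v\ge h$ would be needed — so instead one runs the argument with the contact point being $x_0$ itself: slide $\mathcal S_a^{(v)}$ centered over $\pi(x_0)$ down from above; first contact is at $x_0$ (since $x_0$ is the top of $\Sigma$ and the surfaces are internally tangent there once $v$ is lowered to $h$, using strict convexity to rule out earlier interior contact elsewhere), whence $h=\xi(x_0)$ equals the height of the north pole minus nothing, i.e. the portion of $\mathcal S_a$ above its equator has height $\ge h$, giving $h\le H(a)\le 1/a$.

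The main obstacle is the contact-point analysis: one must guarantee that when the comparison sphere is brought down, the \emph{first} point of contact is an interior point where the inner normals agree and $\Sigma$ lies locally on the concave side of $\mathcal S_a^{(v)}$, so that the elementary comparison $k_0^{\Sigma}(p)\le k_0^{\mathcal S_a}(p)$ is legitimate; here strict convexity of $\Sigma$ up to its boundary, together with the fact that $\mathcal S_a^{(v)}$ has its boundary strictly above $M\times\{0\}$ throughout the descent, is exactly what rules out premature boundary contact and lets the tangency occur in the interior. The remaining ingredient — that $H(a)\le 1/a$ — is the bounded-height computation already performed in the proof of Theorem \ref{th-main}(i), so no new estimate is required there.
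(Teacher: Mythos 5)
The paper does not prove this lemma; it is imported verbatim from \cite{delima-manfio-santos}, where the argument is an ODE estimate along the $T$-trajectories: since $\nabla\Theta=-AT$ on a hypersurface of a product, one has $d\Theta/d\xi=-\langle AT,T\rangle/\|T\|^2\le -\inf_\Sigma k_0$ along the flow of $T$, and integrating from $\partial\Sigma$ (where $\xi=0$) up to the top (where $\Theta=\pm1$) gives exactly $\xi\le 1/\inf_\Sigma k_0$. That argument works for an arbitrary $M$, which is essential since the lemma is invoked in that generality.

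Your comparison-sphere proposal has genuine gaps. First, the barriers you want to use exist (in this paper) only when $M=\hf$ or $M=\s^n$, and ``sliding a congruent copy'' presupposes a transitive isometry group; for an arbitrary Riemannian manifold $M$ there is no such family, so the method cannot prove the stated lemma. Second, even in the model cases the tangency analysis does not close: when you lower $\mathcal S_a^{(v)}$ centered over $\pi(x_0)$, nothing forces the first contact to occur at $x_0$ ($\Sigma$ is not rotational and may touch the descending barrier elsewhere first; strict convexity of $\Sigma$ does not prevent this), and even granting contact at $x_0$, a single interior tangency only yields a pointwise inequality between the second fundamental forms at that one point --- it does not give the global enclosure $\Sigma\subset\{\text{region under }\mathcal S_a\}$ from which the height bound would follow. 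The maximum principle cannot be used to propagate the comparison because $\Sigma$ is not assumed to satisfy any Weingarten equation; it is merely strictly convex with $k_0\ge a$. Third, the normalization $H(a)\le 1/a$ for the barrier's height is asserted but not established: the computation in the proof of Theorem \ref{th-main}(i) bounds only the tail of $\int\rho/\sqrt{1-\rho^2}\,ds$ and produces the constant $\pi/(2a)$ with $a$ a lower bound for $\rho'$ near the endpoint, not $1/a$ with $a$ the infimum of the least principal curvature over the whole barrier. To repair the proof you would have to abandon the barrier scheme and argue intrinsically on $\Sigma$, as in the reference.
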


\begin{theorem} \label{th-uniquenessforcomplete}
Assume $n\ge 3$, and let $\Sigma$ be a complete connected
elliptic Weingarten hypersurface of \,$\mathbb Q_\epsilon^n\times\R$
whose height function has a local
extreme point $x\in\Sigma.$
If the least principal curvature $k_0$ of $\Sigma$
is bounded away from zero, then
$\Sigma$ is an embedded rotational sphere.
\end{theorem}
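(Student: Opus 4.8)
The plan is to reduce the complete case to the compact case of Theorem \ref{th-uniqueness}, the key point being to show that the strictly convex complete hypersurface $\Sigma$ is actually compact, so that Theorem \ref{th-uniqueness} applies verbatim. First I would use the local extreme point $x$ of the height function $\xi$: since $\Sigma$ is strictly convex, near $x$ the hypersurface lies locally on one side of the horizontal hyperplane $P_{\xi(x)}$, and by strict convexity the height function $\xi$ has no other critical points of the ``wrong'' type — more precisely, a strictly convex complete hypersurface in $\q\times\R$ whose height function attains a local extremum must, by a maximum-principle/tangency argument against horizontal hyperplanes, have that extremum be a global one on the connected component, and the hypersurface stays in a closed half-space $\q\times[\xi(x),+\infty)$ (or its reflection). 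So without loss of generality $\Sigma\subset\q\times[0,+\infty)$, tangent to $P_0$ at the single point $x$ (single because strict convexity forbids a flat piece, and any second tangency would again contradict convexity via the reflection/maximum principle).

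Next I would show that $\Sigma$ is a vertical graph over a domain $\Omega\subset\q\times\{0\}$. Because $\Sigma$ is strictly convex and ``cups upward'' from the bottom point $x$, the vertical lines $\{p\}\times\R$ meet $\Sigma$ in at most one point: two intersection points would force a non-convex tangent behavior along the $T$-trajectory through them, contradicting $k_i>0$. Hence $\Sigma$ is the graph of a nonnegative function $u$ over its vertical projection $\Omega$, with $u(x)=0$. Now I invoke the height estimate of Lemma \ref{lem-heightestimate}: although that lemma is stated for compact graphs with boundary, one applies it to the compact subgraphs $\Sigma\cap(\q\times[0,a])$ (these are compact vertical graphs, strictly convex up to their horizontal boundary), obtaining $\xi\le 1/\inf_\Sigma k_0 = 1/\inf k_0 <+\infty$ uniformly, since $k_0$ is bounded away from zero by hypothesis. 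Therefore $\Sigma$ lies in a slab $\q\times[0,L]$ with $L:=1/\inf k_0$.

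It then remains to promote ``lies in a slab'' to ``compact''. For this I would argue that $\Omega=\q$ is impossible and that $\partial\Omega\ne\emptyset$ forces $\Sigma$ to close up: if $\Omega$ is a proper domain, then near a boundy point of $\Omega$ the function $u$ must blow up in gradient so that the tangent spaces become vertical (otherwise the graph would extend, contradicting completeness or maximality), and by strict convexity $\Omega$ is itself a convex (hence topologically nice) domain whose closure is compact provided $\Omega\ne\q$; the bounded height then makes $\overbar\Sigma$ compact. To rule out $\Omega=\q$ (the entire graph case), note that an entire strictly convex graph over $\q$ with $k_0$ bounded away from zero and bounded height is impossible: following the reasoning in Theorem \ref{th-main}-(ii) and the associated ODE analysis, or more directly by comparing with a geodesic sphere of $\q$ of large radius and using that an entire convex graph over a noncompact (or compact) base with $\inf k_0>0$ would have unbounded height — contradicting the slab bound. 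Hence $\Sigma$ is compact, connected, and strictly convex, so Theorem \ref{th-uniqueness} gives that $\Sigma$ is an embedded rotational sphere.

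The main obstacle I anticipate is the last step: rigorously excluding the entire-graph case and handling the behavior of $\Sigma$ near $\partial\Omega$ — i.e., showing that a complete strictly convex graph that is confined to a slab must in fact be compact. The cleanest route is probably to combine the uniform height bound from Lemma \ref{lem-heightestimate} with a tangency comparison against the rotational $(c,\ws)$-type barriers (geodesic-sphere-based graphs) constructed in Section \ref{sec-W-rotational}, using strict convexity to force $\Sigma$ to detach from $P_0$ and reconnect, thereby bounding $\Omega$; alternatively one cites the corresponding compactness argument from \cite{delima-manfio-santos}, which handles exactly this situation for constant higher-order mean curvature and adapts here because only strict convexity and the height estimate are used.
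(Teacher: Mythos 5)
Your overall strategy --- reduce to Theorem \ref{th-uniqueness} by proving compactness, using the height estimate of Lemma \ref{lem-heightestimate} --- is the right one, and your use of the estimate to confine $\Sigma$ to a slab is essentially how the paper uses it. But there are two genuine problems. First, your claim that each vertical line meets $\Sigma$ at most once (so that $\Sigma$ is a single vertical graph over its projection) is false: the conclusion of the theorem is that $\Sigma$ is a rotational sphere, and those spheres are bigraphs, met twice by vertical lines over the interior of their projection; strict convexity does not forbid this. What is true, and what the paper proves, is only that the part $\Sigma_t^+$ of $\Sigma$ lying above a horizontal hyperplane $P_t$ (with $t$ on the far side of the extremum) is a vertical graph with boundary in $P_t$. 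The paper's argument for this is a boundary maximum principle argument --- if $\Sigma_t^+$ were not a graph, some $P_{t'}$ would be orthogonal to $\Sigma$, forcing $\Sigma$ to be symmetric about $P_{t'}$, which is incompatible with the height function being unbounded --- not the tangency heuristic you give.

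Second, and more seriously, your final step --- promoting ``contained in a slab'' to ``compact'' --- is exactly the gap you flag, and none of the routes you sketch closes it; in particular, for $\epsilon=-1$ the slab $\h^n\times[0,L]$ is noncompact, so the slab bound alone yields nothing, and you have not established proper embeddedness either. The paper sidesteps all of this by first invoking \cite[Theorems 1 and 2]{delima}: a complete, connected, strictly convex hypersurface of $\q\times\R$ is properly embedded and homeomorphic to either $\s^n$ or $\R^n$, and in the noncompact case the extreme point is unique and the height function is unbounded. The proof then ends in a two-line contradiction: if $\Sigma$ were noncompact, choose $t$ with $\xi(x)-t>1/\inf_\Sigma k_0$; the graph $\Sigma_t^+$ then violates Lemma \ref{lem-heightestimate}. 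Hence $\Sigma$ is a sphere and Theorem \ref{th-uniqueness} applies. You need this external structure theorem (note that Theorem \ref{th-uniqueness} itself already relies on \cite{delima} for embeddedness), or a substitute for it, to make your argument rigorous.
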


\begin{proof}
As in Theorem~\ref{th-uniqueness}, $\Sigma$ fulfills the hypotheses of \cite[Theorems 1 and 2]{delima}, which implies that
$\Sigma$ is properly embedded and homeomorphic to either
$\s^n$ or $\R^n.$ In the former case, the result follows from
Theorem~\ref{th-uniqueness}, so we can assume that $\Sigma$ is noncompact.
Under this assumption, \cite[Theorems 1 and 2]{delima} also give that
the extreme point $x$ (which we assume to be a maximum)
is unique, and that the height function $\xi$ of $\Sigma$ is unbounded (below).

Given a horizontal hyperplane
$P_t=\q\times\{t\}$ with $t<\xi(x),$ the part $\Sigma_t^+$ of $\Sigma$ which lies above $P_t$
must be a vertical graph with boundary in $P_t$.
If not, for some $t'$ between $t$ and $\xi(x),$ $P_{t'}$ would be orthogonal
to $\Sigma$ at one of its points. Then, the
boundary maximum principle would give that $\Sigma$ is
symmetric with respect to $P_{t'},$ which is impossible, since we are assuming
$\xi$ unbounded, and the closure of $\Sigma_{t'}^+$ in $\Sigma$ is compact.

It follows from the above that, for
$|t|$ sufficiently large, one has
\[\xi(x)-t>\frac{1}{\inf_\Sigma k_0}\ge\frac{1}{\inf_{\Sigma_t^+} k_0}\,,\]
which clearly
contradicts Lemma \ref{lem-heightestimate}. This finishes the proof.
\end{proof}

\begin{corollary}
Any complete strictly convex CSC hypersurface  of \,$\q\times\R$ $(n\ge 3)$
satisfying the hypotheses
of Theorem \ref{th-uniquenessforcomplete}   is
necessarily an embedded rotational sphere  of constant sectional
curvature $K>\epsilon.$
\end{corollary}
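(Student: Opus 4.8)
The plan is to deduce the corollary by assembling three results already in place: Theorem~\ref{th-scalar} for existence, Theorem~\ref{th-uniqueness} for the compact case, and Theorem~\ref{th-uniquenessforcomplete} for the complete case. The starting observation, recorded in Section~\ref{sec-symmetricCSC} via equation~\eqref{eq-scalar}, is that a CSC hypersurface $\Sigma$ of $\q\times\R$ with $n\ge 3$ is automatically a $(c,\ws)$-hypersurface for the general elliptic Weingarten function $\ws = 2H_2 + \epsilon(n-1)(2\theta^2+n-2)$. Hence any complete strictly convex CSC hypersurface $\Sigma$ satisfying the hypotheses of Theorem~\ref{th-uniquenessforcomplete} is a complete, connected, strictly convex elliptic Weingarten hypersurface to which that theorem applies directly, yielding that $\Sigma$ is an embedded rotational sphere.

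It then remains only to identify the value of the constant sectional curvature. First I would invoke \cite[Theorems 1 and 2]{delima} (as in the proofs of Theorems~\ref{th-uniqueness} and~\ref{th-uniquenessforcomplete}) to note that a compact strictly convex $\Sigma$ has a point where the height function attains a maximum; since $\Sigma$ is rotational and strictly convex, this sphere is exactly of the form produced in Theorem~\ref{th-main}-(i), obtained by gluing an $(f_s,\phi)$-graph over a geodesic ball with its horizontal reflection. Its $\rho$-function must therefore solve the ODE~\eqref{eq-edo4} with $\alpha(s)=-\cot_\epsilon s$, i.e.\ equation~\eqref{eq-tauode}, subject to the boundary behaviour $\rho(0)=0$. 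The analysis in Section~\ref{sec-symmetricCSC} shows that the unique solution of~\eqref{eq-tauode} regular at $s=0$ with $\tau(0)=0$ is $\tau(s)=\mathfrak C_n\tan_\epsilon^2 s$, giving $\rho(s)=\sqrt{\mathfrak C_n}\,\tan_\epsilon s$ as in~\eqref{eq-rhoconstantseccurvature}; by the remark following~\eqref{eq-rhoconstantseccurvature} this $\rho$ satisfies~\eqref{eq-edo10}, which by~\eqref{eq-scalargraph} is precisely the condition that $\Sigma$ have constant sectional curvature, equal to $K=c/(n(n-1))$. Since $c>0$ forces $\mathfrak C_n=(c-\epsilon n(n-1))/(n(n-1))>\epsilon\cdot(\text{positive})$ — more simply, strict convexity and compactness force $c>0$, hence $K=c/(n(n-1))>\epsilon$ because $c-\epsilon n(n-1)>0$ is exactly the hypothesis $c>\epsilon n(n-1)$ of Theorem~\ref{th-scalar}, which is automatic here.

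The one point requiring a little care — and the main (mild) obstacle — is the uniqueness of the $\rho$-function, i.e.\ arguing that an \emph{arbitrary} complete strictly convex rotational CSC sphere must coincide with the specific one of Theorem~\ref{th-scalar} rather than merely being rotational. The cleanest route is not to prove a separate ODE uniqueness statement but to observe that once $\Sigma$ is known (from Theorem~\ref{th-uniqueness} or~\ref{th-uniquenessforcomplete}) to be an embedded rotational sphere, it is in particular a rotational graph over a geodesic ball reflected across a horizontal hyperplane, so it \emph{is} an $(f_s,\phi)$-graph with $\{f_s\}$ the geodesic spheres; its $\rho$-function is then forced by Lemma~\ref{lem-graph} to satisfy~\eqref{eq-edo}, equivalently~\eqref{eq-tauode}, and the requirement $\rho(0)=0$ (the sphere is tangent to a horizontal hyperplane at its pole, by strict convexity and compactness) singles out the solution~\eqref{eq-rhoconstantseccurvature} by Picard uniqueness away from the singular point together with the regularity forced at $s=0$. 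From there the constant sectional curvature identification via~\eqref{eq-edo10} and~\eqref{eq-scalargraph} is immediate, completing the proof.
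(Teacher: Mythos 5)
Your proposal is correct and follows essentially the same route as the paper, whose proof of this corollary is simply the concatenation of the cited theorems: CSC implies $(c,\ws)$-Weingarten via \eqref{eq-scalar}, Theorem \ref{th-uniquenessforcomplete} yields the embedded rotational sphere, and the ODE analysis of Section \ref{sec-symmetricCSC} identifies $K=c/(n(n-1))>\epsilon$. Your extra care in arguing that the $\rho$-function of the resulting rotational sphere is forced (by boundedness at the singular point $s=0$) to be the solution \eqref{eq-rhoconstantseccurvature}, hence satisfies \eqref{eq-edo10}, is precisely the detail the paper leaves implicit when it invokes Theorem \ref{th-scalar}, and you supply it correctly.
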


\begin{proof}
Let $\Sigma_0\subset\q\times\R$ be a strictly convex hypersurface of
constant scalar curvature $c$ (and so an elliptic $W_c$-hypersurface)
which fulfills the hypotheses of Theorem \ref{th-uniquenessforcomplete}. Then,
$\Sigma_0$ is an embedded rotational $W_c$-sphere. Let
$o\in\Sigma_0$ be the point of least height on $\Sigma_0.$ Clearly,
$\theta^2=1$ at $o.$ This, together with \eqref{eq-scalar0} and the strict
convexity of $\Sigma_0$, implies that
$c>\epsilon n(n-1).$ Also, since $\Sigma_0$ is rotational, in a neighborhood
of $o,$ $\Sigma_0$ is a rotational $W_c$-$(f_s,\phi)$-graph. In particular, its $\rho$ function
is the solution of \eqref{eq-edo3}, for $\alpha=-\cot_\epsilon,$ which satisfies $\rho(0)=0.$ Thus, in a neighborhood
of $o,$ up to an ambient isometry, $\Sigma_0$ coincides with the hypersurface $\Sigma$ of constant scalar curvature
$c>\epsilon n(n-1)$ obtained in Theorem \ref{th-scalar}. Since $\Sigma_0$ and $\Sigma$ are both elliptic, they must coincide.
In particular,  $\Sigma=\Sigma_0$ is compact, i.e., $c>0$ and $\Sigma_0$ is a rotational sphere of constant
curvature $K=c/(n(n-1))>\epsilon.$
\end{proof}

\section{Constant Sectional Curvature Hypersurfaces of $\q\times\R$} \label{sec-sectionalcurvature}

In this final section, we apply the methods and results developed in
the previous ones to give
new proofs of the main theorems of \cite{manfio-tojeiro}. There, for $n\ge 3,$
the authors construct and classify the constant sectional curvature hypersurfaces of $\q\times\R$
$(\epsilon\ne 0).$
Their proofs rely on parametrizations of symmetric hypersurfaces of $\q\times\R,$
as introduced in \cite{dillenetal}, as well as
on the main result of that paper. Our proofs, instead, are coordinate-free and more direct.
Another distinction of our approach is the simple way we show that constant sectional curvature
hypersurfaces of $\q\times\R$ with non vanishing $T$-field have the $T$-property. This fact, which is also
proved and nicely used in \cite{manfio-tojeiro}, plays a fundamental role here, as we shall see.

First, let us observe  that, if $\Sigma:=\Sigma_0\times\R$ is a symmetric
cylinder over a hypersurface $\Sigma_0$ of $\q,$ then
$\Sigma_0$ is an open set of a geodesic sphere, a horosphere or an
equidistant hypersurface. Thus, from Gauss equation \eqref{eq-gauss},
we have that $\Sigma$ has constant sectional curvature if and only if
$\Sigma_0$ is contained in a horosphere. If so, $\Sigma$ is
necessarily  flat. We also point out that, by the first equality in
\eqref{eq-scalargraph},
there is no flat parabolic $(f_s,\phi)$-graph in $\q\times\R.$

\begin{theorem} \label{th-sectionalcurvature01}
Given $n\ge 3,$   let $\Sigma$ be a connected symmetric hypersurface of \,$\q\times\R$ with constant sectional
curvature $K.$ Then, $K\ge\epsilon$ and $\Sigma$ is an open set of one of the following
properly embedded hypersurfaces:
\begin{itemize}[parsep=1ex]

 \item[\rm i)] One of the rotational  hypersurfaces of constant sectional curvature $K>\epsilon$ of Theorem \ref{th-scalar}.
 \item[\rm ii)] One of the translational hypersurfaces of constant sectional curvature $K\in(-1,0)$ of Theorems \ref{th-parabolicscalar}-(i)
 and \ref{th-hyperbolicscalar}-(i).
 \item[\rm iii)] A horizontal hyperplane of constant sectional curvature $K=\epsilon$.
 \item[\rm iv)] A flat vertical cylinder over a horosphere.
\end{itemize}
\end{theorem}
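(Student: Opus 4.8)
The plan is to show that a symmetric hypersurface of constant sectional curvature is determined by the ODE analysis already carried out in Section~\ref{sec-symmetricCSC}, and then to match each case to one of the four listed families. First I would split into the two structural possibilities: either the $T$-field of $\Sigma$ vanishes identically on an open set, or it does not. If $T\equiv 0$ on an open subset, then $\Sigma$ is locally a vertical cylinder $\Sigma_0\times\R$; since $\Sigma$ is symmetric, $\Sigma_0$ is an open piece of a geodesic sphere, a horosphere, or an equidistant hypersurface of $\q$, and the remark preceding the theorem (using the Gauss equation \eqref{eq-gauss}) forces $\Sigma_0$ to lie in a horosphere and $\Sigma$ to be flat. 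This is case (iv); note this also yields $K=0\ge\epsilon$ when $\epsilon=-1$, consistent with the claim.

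Next I would treat the generic case where $T$ is not identically zero, hence nonzero on a dense open set. The crucial input is that a constant sectional curvature hypersurface with nonvanishing $T$ has the $T$-property, i.e.\ $T/\|T\|$ is a principal direction; the excerpt promises a coordinate-free proof of this. Granting it, $\Sigma$ is locally an $(f_s,\phi)$-graph over a parallel family $\mathscr F$ of totally umbilical hypersurfaces of $\q$ (geodesic spheres, horospheres, or equidistant hypersurfaces), since $\Sigma$ is symmetric and the angle function together with the principal curvatures are constant along horizontal sections. By \eqref{eq-scalargraph}, constant sectional curvature $K$ forces both $\alpha^2\rho^2+\epsilon=K$ and $-\alpha\rho\rho'+\epsilon(1-\rho^2)=K$; subtracting gives exactly equation \eqref{eq-edo10}, i.e.\ $\alpha\rho\rho'+(\alpha^2+\epsilon)\rho^2=0$. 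From the first relation, $K=\alpha^2\rho^2+\epsilon\ge\epsilon$, establishing the bound $K\ge\epsilon$. I would also note that $K=\epsilon$ exactly when $\rho\equiv 0$, giving a totally geodesic horizontal hyperplane, which is case (i).

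Then I would solve \eqref{eq-edo10} in each of the three umbilical cases and identify the resulting hypersurface with the ones already constructed. For the rotational case, $\alpha=-\cot_\epsilon s$, and \eqref{eq-edo10} is solved (up to the singular-point normalization $\rho(0)=0$) precisely by $\rho(s)=\sqrt{\mathfrak C_n}\tan_\epsilon s$ with $K=c/(n(n-1))$, which is the function \eqref{eq-rhoconstantseccurvature} from Theorem~\ref{th-scalar}; since a $(c,\ws)$-hypersurface of constant sectional curvature has $S$ constant and, conversely, constant $S$ with the constant sectional curvature constraint pins down $\mathfrak C_n$, this is case (ii). For the parabolic case, $\alpha=1$, and \eqref{eq-edo10} gives a constant solution $\rho^2=K+1\in[0,1)$, recovering $\Sigma_1(c)$ of Theorem~\ref{th-parabolicscalar}-(i); for the hyperbolic case, $\alpha=-\tanh s$ and \eqref{eq-edo10} is solved by $\rho(s)=(K+1)\coth s$, which is exactly the limiting solution appearing in Theorem~\ref{th-hyperbolicscalar}-(iii). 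Uniqueness of the $(f_s,\phi)$-graph structure (by Picard for the ODE, once the value of $\rho$ at one point is fixed by the boundary/critical-point behaviour dictated by completeness of $\Sigma$) then shows $\Sigma$ is an open subset of one of these properly embedded models, giving case (iii). The main obstacle I expect is the bookkeeping at the singular locus: reconciling a local $(f_s,\phi)$-graph piece with the global symmetric hypersurface requires showing that the symmetry type (elliptic/parabolic/hyperbolic) is constant along $\Sigma$ and that the graph extends to the axis or boundary exactly as in the constructions of Section~\ref{sec-symmetricCSC}; this is where one must invoke connectedness, properness, and the classification of the isometry actions rather than mere local ODE theory.
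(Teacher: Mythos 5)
Your proposal is correct and follows essentially the same route as the paper: reduce to an $(f_s,\phi)$-graph over a parallel family of totally umbilical hypersurfaces, read off $K=\alpha^2\rho^2+\epsilon\ge\epsilon$ and the ODE \eqref{eq-edo10} from \eqref{eq-scalargraph}, and solve that ODE in the rotational, parabolic and hyperbolic cases to match the models of Theorems \ref{th-scalar}, \ref{th-parabolicscalar} and \ref{th-hyperbolicscalar}, using that the initial condition is pinned down by $K$ and $\alpha(s_0)$. The only cosmetic difference is that you invoke the $T$-property (Lemma \ref{lem-T}) to get the graph structure, whereas here the symmetry hypothesis already supplies the umbilical foliation directly, and the paper reserves that lemma for the non-symmetric classification in Theorem \ref{th-sectionalcurvature02}.
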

\begin{proof}
Let us suppose that $\theta T$ never vanishes on $\Sigma.$ In this case,
$\Sigma$ is given by a union of vertical graphs with no critical points.
Let $\Sigma'$ be one of these graphs.
Then, since $\Sigma$ is symmetric, up to a reflection over a horizontal hyperplane,
$\Sigma'$ is an $(f_s,\phi)$-graph over a family $\mathscr F$ of parallel totally umbilical hypersurfaces of $\q$
(recall that $\phi$ is supposed to be increasing).

As we have seen in Section \ref{sec-symmetricCSC},
the $\rho$ function of $\Sigma'$ satisfies the ODE
\begin{equation} \label{eq-odesectional}
\alpha\rho\rho'+(\alpha^2+\epsilon)\rho^2=0,
\end{equation}
where $\alpha(s)$ is the principal curvature of $f_s$\,.
Also, from the first equality in \eqref{eq-scalargraph},
we have that $K>\epsilon$ and that any initial condition
$\rho_0=\rho(s_0)$ is determined by $K$ and $\alpha(s_0).$

Since $\Sigma$ is connected, it is
either rotational or translational. Assuming the former,
we have that  $\Sigma'$ is rotational, so that
$\alpha=-\coth_\epsilon$ and  the solution $\rho$ of
\eqref{eq-odesectional} is $\rho(s)=C\tan_\epsilon(s)$ for some constant $C>0.$
However, since $C$ is determined by $K$ and $\alpha(s_0),$
this solution must coincide with the one defined
in \eqref{eq-rhoconstantseccurvature}, that is, $$\rho(s)=(K-\epsilon)\tan_\epsilon(s).$$
In particular, up to an ambient isometry, $\Sigma'$ is contained in  the properly embedded
CSC hypersurface of Theorem \ref{th-scalar}, say  $\tilde\Sigma,$ that
has constant sectional curvature $K.$ (Notice that $\tilde\Sigma$ is built on an
$(f_s,\phi)$-graph whose $\rho$-function is  $\tilde\rho(s)=(K-\epsilon)\tan_\epsilon(s)=\rho(s).$)
Since $\Sigma'$ is arbitrary and $\Sigma$ is connected,
we have that $\Sigma\subset\tilde\Sigma.$

If $\Sigma$ is translational, it is either
parabolic or hyperbolic. In any case,
we can argue as in the preceding paragraph
and conclude that $\Sigma$ is contained in one
of the translational hypersurfaces of Theorems \ref{th-parabolicscalar}-(i)
and \ref{th-hyperbolicscalar}-(i).

Now, suppose that $T$  vanishes on an open set $\mathcal O$ of $\Sigma.$ Thus,
$\mathcal O$ is contained in a (totally geodesic) horizontal hyperplane. Gauss equation
then gives that $\mathcal O,$ and so $\Sigma,$ has constant sectional curvature $K=\epsilon.$
This implies that $\Sigma=\mathcal O.$ Otherwise, there would be a point in $\Sigma$ at
which $\theta T\ne 0.$ But, as we have seen, $K>\epsilon$ at such a point.

Analogously, assume that $\theta$ vanishes on an open set $\mathcal O$ of $\Sigma.$
In this case, as we pointed out, $\mathcal O$ is a flat cylinder over an open set
of a horosphere of $\h^n.$ In particular, $\Sigma$ is flat and parabolic.
Since a parabolic $(f_s,\phi)$-graph cannot be flat,
$\theta T$ must vanish on $\Sigma,$ which implies that $\Sigma=\mathcal O.$

Finally, let us suppose that neither $T$ nor $\theta$ vanishes on an open set
of $\Sigma.$ Under this assumption, the set $\mathcal O\subset\Sigma$ on which $\theta T$ never vanishes
is open and dense in $\Sigma.$ However, from the first part of the proof,
any connected component of $\mathcal O$ is contained
in a fixed hypersurface $\tilde\Sigma$ (from one of the
Theorems \ref{th-scalar}, \ref{th-parabolicscalar}-(i) or \ref{th-hyperbolicscalar}-(i)),
which implies that the same is true for $\Sigma.$
This concludes the proof.
\end{proof}

Given a hypersurface $\Sigma$ of $\q\times\R$, let
$\{X_1\,, \dots ,X_n\}\subset T\Sigma$ be an orthonormal frame of its principal directions.
It follows from Gauss equation \eqref{eq-gauss001} that
\begin{equation}\label{eq-ricci02}
{\rm Ric}(X_i\,,X_j)=\sum_{k=1}^{n}\langle\overbar R(X_k,X_i)X_j,X_k\rangle+H\delta_{ij}k_i-\delta_{ij}k_i^2\,,
\end{equation}
where $k_1\,,\dots ,k_n$ are the corresponding principal curvatures of $\Sigma,$
and ${\rm Ric}$ denotes its \emph{Ricci tensor}, which we define as
\[
{\rm Ric}(X,Y):={\rm trace}(Z\mapsto R(Z,X)Y), \,\,\, X,Y\in T\Sigma.
\]

The following lemma, which has its own interest, gives a simple characterization
of the hypersurfaces of $\q\times\R$ having the $T$-property.
(Recall that a hypersurface $\Sigma\subset M\times\R$ is said to have the $T$-property
if $T$ is a principal direction at any  of its points.)

\begin{lemma} \label{lem-T}
Given $n\ge 3,$  let $\Sigma$ be a hypersurface of \,$\q\times\R$ with
non vanishing $T$-field. Then, $\Sigma$ has the
$T$-property if and only if its principal directions $X_1\,, \dots ,X_n$ diagonalize its Ricci tensor.
Consequently, if $\Sigma$ is an Einstein hypersurface (in particular, if $\Sigma$ has constant sectional curvature),
then it has the $T$-property.
\end{lemma}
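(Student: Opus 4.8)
The plan is to compute the off--diagonal components of the Ricci tensor of $\Sigma$ in an arbitrary orthonormal frame of principal directions, and then to observe that their vanishing is exactly the $T$-property.

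\emph{Step 1 (the key computation).} I would substitute the ambient curvature tensor \eqref{eq-barcurvaturetensor} into the Gauss--type identity \eqref{eq-ricci02}. Since each $X_k$ is orthogonal to $N$ and $T=\partial_t-\Theta N$ by \eqref{eq-gradxi}, one has $\langle X_l,\partial_t\rangle=\langle X_l,T\rangle=:t_l$ for every $l$. Summing the six terms of \eqref{eq-barcurvaturetensor} over $k$, the Kronecker contractions collapse and I expect to obtain
\[
\sum_{k=1}^{n}\langle\overbar R(X_k,X_i)X_j,X_k\rangle=\epsilon\big((n-1-\|T\|^2)\delta_{ij}-(n-2)\,t_it_j\big),
\]
hence, with \eqref{eq-ricci02},
\[
{\rm Ric}(X_i,X_j)=\big(\epsilon(n-1-\|T\|^2)+Hk_i-k_i^2\big)\delta_{ij}-\epsilon(n-2)\,t_it_j.
\]
In particular, ${\rm Ric}(X_i,X_j)=-\epsilon(n-2)\,t_it_j$ for $i\neq j$, and since $n\geq 3$ the factor $\epsilon(n-2)$ is nonzero.

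\emph{Step 2 (coordinate--free reformulation and conclusion).} The displayed formula says that the Ricci operator $S$ of $\Sigma$ splits as $S=D-\epsilon(n-2)\langle\,\cdot\,,T\rangle T$, where $D:=\epsilon(n-1-\|T\|^2)\,{\rm Id}+HA-A^2$ is a polynomial in the shape operator $A$ and therefore commutes with $A$. Thus $[A,S]=-\epsilon(n-2)[A,\langle\,\cdot\,,T\rangle T]$, and since $A$ is self-adjoint, $[A,\langle\,\cdot\,,T\rangle T](X)=\langle X,T\rangle AT-\langle X,AT\rangle T$. Because $T$ never vanishes and $\epsilon(n-2)\neq 0$, this operator is identically zero precisely when $AT$ is proportional to $T$, i.e.\ when $T$ is a principal direction --- which is the $T$-property. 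Hence $\Sigma$ has the $T$-property if and only if $A$ and $S$ commute, i.e.\ if and only if the principal directions diagonalize the Ricci tensor. The final assertion is then immediate: if $\Sigma$ is Einstein, $S$ is a multiple of ${\rm Id}$, so $[A,S]=0$ automatically; a fortiori this holds when $\Sigma$ has constant sectional curvature.

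\emph{Main difficulty.} There is no serious obstacle here; the computation is routine. The only points requiring care are (a) carrying out the contraction in \eqref{eq-barcurvaturetensor} without sign slips, and (b) phrasing the equivalence cleanly when $A$ has repeated eigenvalues, which is why I would state it through the commutator $[A,S]$ (equivalently, simultaneous diagonalizability of $A$ and $S$) rather than fixing a frame. If one prefers the frame formulation, one simply notes that when the $T$-property fails no orthonormal frame of principal directions can make all products $t_it_j$ with $i\neq j$ vanish, since $T\neq 0$ forces some $t_l\neq 0$ in every such frame.
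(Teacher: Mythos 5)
Your proof is correct and takes essentially the same route as the paper: both substitute the ambient curvature tensor \eqref{eq-barcurvaturetensor} into the Gauss--Ricci identity \eqref{eq-ricci02} to read off ${\rm Ric}(X_i,X_j)=-\epsilon(n-2)\langle X_i,T\rangle\langle X_j,T\rangle$ for $i\neq j$, from which the equivalence with the $T$-property is immediate. Your Step~2 simply spells out, via the commutator $[A,S]$, what the paper condenses into ``from which the result follows,'' which is a clean way to avoid the slight ambiguity when $A$ has repeated eigenvalues.
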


\begin{proof}
Choosing $i, j, k\in\{1,\dots ,n\}$ with $i\ne j\ne k\ne i,$ it follows
from \eqref{eq-barcurvaturetensor} that
\[
\langle\overbar R(X_k,X_i)X_j,X_k) = -\epsilon\langle X_i,T\rangle\langle X_j,T\rangle.
\]
Combining this equality with \eqref{eq-ricci02}, we get
\[
{\rm Ric}(X_i\,,X_j)=\sum_{k=1}^{n}\langle\overbar R(X_k,X_i)X_j,X_k)=-\epsilon(n-2)\langle X_i\,, T\rangle\langle X_j\,, T\rangle,
\]
from which the result follows.
\end{proof}

Let us show now that,
setting
\[
I_\epsilon:=\left\{
\begin{array}{rcl}
(0,1)         & \text{if} & \epsilon=\phantom-1\\[1ex]
(-1,0)        & \text{if} & \epsilon=-1,
\end{array}
\right.
\]
for all $K\in I_\epsilon,$
there exists a nonsymmetric constant sectional curvature
hypersurface $\Sigma_K$ in $\mathbb Q_\epsilon^3\times\R$ whose angle function is constant,
and whose sectional curvature is $K$ (see also \cite[Section 6]{manfio-tojeiro}).
For its construction, one has just
to consider a parallel family $\mathscr F$ of embeddings $f_s:M_0\rightarrow\mathbb Q_\epsilon^3,$
$s\in (-\delta,\delta),$ such
that $f_0$ is non umbilical and flat with the induced metric, that is,
$k_1^0(p)k_2^0(p)=-\epsilon$ for all $p\in M_0$\,. Then, the classical formula for the principal curvatures of the
parallel hypersurfaces $f_s$ gives that all $f_s$ are, in fact, flat and non umbilical, so that
\[
k_1^s(p)k_2^s(p)=-\epsilon \,\,\, \forall p\in M_0, \,\, s\in(-\delta,\delta).
\]
Now, consider the $(f_s,\phi)$-graph $\Sigma_K$ with constant
$\rho$-function $\rho=\sqrt{1-\epsilon K}.$ Then, $\Sigma_K$ has
constant angle $\theta=\sqrt{1-\rho^2}.$ Besides,
the principal curvatures of
$\Sigma_K$ are
$k_1=-\rho k_1^s,$ $k_2=-\rho k_2^s,$ and $k_3=0.$ Hence,
from Gauss equation \eqref{eq-gauss},
$\Sigma_K$ has indeed constant sectional curvature $K.$

\begin{definition}
We shall call such a $\Sigma_K\subset\mathbb Q_\epsilon^3\times\R$ a \emph{flat-foliated graph.}
\end{definition}

Our next and final result, together with Theorem \ref{th-sectionalcurvature01}, provides
a full classification of the constant sectional curvature hypersurfaces of
$\q\times\R$ $(n\ge 3)$.

\begin{theorem}\label{th-sectionalcurvature02}
Let $\Sigma$ be a connected hypersurface of \,$\q\times\R$ with constant sectional curvature $K.$
If $n>3,$ $\Sigma$ is symmetric.  If $n=3,$ $\Sigma$ is either symmetric or
a nonsymmetric hypersurface which is locally a
constant angle flat-foliated graph $\Sigma_K.$
\end{theorem}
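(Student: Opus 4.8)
The plan is to exploit Lemma~\ref{lem-T}: since $\Sigma$ has constant sectional curvature it is Einstein, hence its principal directions $X_1,\dots,X_n$ diagonalize the Ricci tensor, and therefore $\Sigma$ has the $T$-property whenever $T\neq 0$. The first step is to dispose of the trivial cases. If $T\equiv 0$ on an open set, then $\Sigma$ is (locally) a horizontal hyperplane, which is symmetric. If $\theta\equiv 0$ on an open set, then $\Sigma$ is (locally) a vertical cylinder $\Sigma_0\times\R$ over a hypersurface $\Sigma_0\subset\q$, and the Gauss equation \eqref{eq-gauss} forces $\Sigma_0$ to have constant sectional curvature inside $\q$, hence $\Sigma_0$ is an open piece of a geodesic sphere, a horosphere, or an equidistant hypersurface, so $\Sigma$ is again symmetric. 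So from now on we may assume that $\theta T$ is nowhere zero on a dense open subset of $\Sigma$, and hence, by Lemma~\ref{lem-T}, that $\Sigma$ has the $T$-property there.

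The heart of the argument is to show that, when $n>3$, the $T$-property together with $K$ constant forces the unit vector field $T/\|T\|$ to be, locally, the velocity field of a family of parallel totally umbilical hypersurfaces of $\q$ lying in the horizontal slices, i.e. that the horizontal sections $\Sigma_t$ are totally umbilical in $P_t\cong\q$ with the \emph{same} principal curvature at every point of a given section. Concretely: write the shape operator in the principal frame with $X_n=T/\|T\|$, let $k_n$ be the principal curvature in the $T$-direction, and let $k_1,\dots,k_{n-1}$ be the others. From \eqref{eq-gauss001} applied to the horizontal section and from $\|T\|^2=1-\theta^2$, the sectional curvatures $K(X_i,X_j)$ for $i,j\le n-1$ read $k_ik_j+\epsilon(1-(1-\theta^2))=k_ik_j+\epsilon\theta^2$, and setting all of these equal to the common value $K$ for the $\binom{n-1}{2}$ pairs available when $n-1\ge 3$ (this is exactly where $n>3$ enters) forces $k_1=\dots=k_{n-1}=:\alpha$, with $\alpha^2+\epsilon\theta^2=K$. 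Thus the horizontal sections are umbilical. One then runs the Codazzi equation for the product (using \eqref{eq-barcurvaturetensor}) in the directions $X_i,X_n$ to see that $\alpha$ is constant along each section, so each section is an open piece of a geodesic sphere, horosphere, or equidistant hypersurface of $\q$, and that these pieces fit together into a parallel family; that is, $\Sigma$ is (locally, hence globally by connectedness) one of the symmetric hypersurfaces already classified in Theorem~\ref{th-sectionalcurvature01}. The constraint $K\ge\epsilon$ and the identifications of which symmetric hypersurface occurs are then read off from $\alpha^2+\epsilon\theta^2=K$ and $0\le\theta^2\le 1$.

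For $n=3$ there is only one pair $(X_1,X_2)$ of horizontal principal directions, so the relation $k_1k_2+\epsilon\theta^2=K$ imposes no umbilicity, and the above rigidity fails. Instead, the plan is to use the remaining Gauss equations $K(X_i,X_3)=-\langle\text{something}\rangle$; from \eqref{eq-scalargraph}-type identities (valid because of the $T$-property) one gets $k_1k_2=K-\epsilon\theta^2$ and a second relation involving $k_3$, $\theta$, and the derivative of $\theta$ along $T$. Differentiating the constant-$K$ relations along $X_1,X_2$ and comparing with Codazzi one derives that either $\theta$ is constant (the genuinely new, nonsymmetric examples) or $\theta$ varies and the same umbilicity argument can be forced by an additional integrability identity, pushing us back into the symmetric case. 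When $\theta$ is constant and nonzero the height function has no critical points, $\Sigma$ need not be symmetric, and the sign analysis of $\alpha^2=K-\epsilon\theta^2$ together with $0<\theta^2<1$ gives $K\in(0,1)$ for $\epsilon=1$ and $K\in(-1,0)$ for $\epsilon=-1$. The main obstacle I expect is precisely the $n=3$ bookkeeping: showing that non-constancy of $\theta$ cannot coexist with nonumbilical horizontal sections without collapsing to a symmetric configuration — this requires carefully combining the Codazzi equations for the ambient product \eqref{eq-barcurvaturetensor} with the $T$-property, rather than any single clean identity, and is the step where one must be most careful about which directional derivatives of $\theta$ and of the $k_i$ are independent.
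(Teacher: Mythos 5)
Your overall strategy --- Lemma \ref{lem-T} to get the $T$-property off the trivial cases, then the Gauss equation in horizontal directions to force umbilicity of the sections when $n>3$ --- is the same as the paper's, but there are two genuine problems. First, a computational error: with the $T$-property, $T$ is parallel to $X_n$, so for $i,j\le n-1$ the projection $T_{ij}$ of $T$ onto ${\rm Span}\{X_i,X_j\}$ is \emph{zero}, and \eqref{eq-gauss} gives $K(X_i,X_j)=k_ik_j+\epsilon$, not $k_ik_j+\epsilon\theta^2$; the term $\epsilon\theta^2=\epsilon(1-\|T\|^2)$ belongs instead to the mixed curvatures $K(X_i,X_n)$. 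You have swapped the two cases. This does not affect the conclusion $k_1=\cdots=k_{n-1}$ for $n>3$, but it corrupts everything downstream that depends on the explicit relation (your ``$\alpha^2+\epsilon\theta^2=K$'' should be $k_i^2=K-\epsilon$, which is where $K\ge\epsilon$ comes from), and in particular your sign analysis for $n=3$: the correct source of the range of $K$ is the mixed equation $K=k_ik_n+\epsilon\theta^2$ with $k_n=\rho'=0$, i.e.\ $K=\epsilon\theta^2$, which immediately yields $K\in(0,1)$ or $K\in(-1,0)$.

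Second, and more importantly, the $n=3$ dichotomy --- the actual content of the second assertion --- is not proved; you explicitly defer it to ``an additional integrability identity'' you expect to be the main obstacle. The paper resolves it with no Codazzi bookkeeping at all: once the $T$-property holds, \cite[Theorem 1]{tojeiro} (or \cite[Theorem 6]{delima-roitman}) shows each local graph is an $(f_s,\phi)$-graph over a parallel family, and then the two mixed Gauss equations $K=-k_i^s\rho\rho'+\epsilon(1-\rho^2)$, $i=1,2$, force either $\rho'\equiv 0$ (constant angle function, the nonsymmetric case) or $k_1^s=k_2^s$ (umbilical leaves, hence symmetric). You bypass Tojeiro's theorem and propose to rebuild the parallel-family structure from Codazzi by hand; that is feasible for $n>3$ where umbilicity is already forced pointwise, but for $n=3$ it leaves you without the one-variable function $\rho(s)$ and its derivative, which is exactly the tool that makes the dichotomy a two-line argument. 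As written, the $n=3$ case is a gap, not a proof.
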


\begin{proof}

Proceeding as in the proof of Theorem \ref{th-sectionalcurvature01}, let us assume first
that $\theta T$ is nowhere vanishing on $\Sigma,$ in which case $\Sigma$ is a union of
vertical graphs with no critical points.

Let $\Sigma'\subset\Sigma$ be a vertical graph.
By Lemma \ref{lem-T}, $T$ is a principal direction of $\Sigma'$. Hence,
\cite[Theorem 1]{tojeiro} (see also \cite[Theorem 6]{delima-roitman}) applies and
gives that $\Sigma'$ is an $(f_s,\phi)$-graph over a
family $\mathscr F$ of parallel hypersurfaces of $\q.$ Thus,
by combining the identities \eqref{eq-principalcurvatures} with Gauss equation
\eqref{eq-gauss}, we conclude that
the $\rho$ function of $\Sigma'$ satisfies
\begin{equation}\label{eq-scalargraph02}
\begin{aligned}
 K &=k_i^s(p)k_j^s(p)\rho^2(s)+\epsilon \quad (i\ne j=1,\dots, n-1). \\
 K &=-k_i^s(p)\rho(s)\rho'(s)+\epsilon(1-\rho^2(s)) \quad (i=1,\dots, n-1). \\
 \end{aligned}
\end{equation}

If $n>3,$ it follows easily from the first of the equations in \eqref{eq-scalargraph02}
that $k_i^s(p)$ is independent of $i$ and $p.$ Hence,
each $f_s\in\mathscr F$ is totally umbilical, which implies that  $\Sigma'$
is symmetric.  Therefore, $\Sigma$ is symmetric, since $\Sigma'\subset\Sigma$ is arbitrary.

Suppose now that $n=3.$ If  $f_s\in\mathscr F$ is totally umbilical for any graph
$\Sigma'\subset\Sigma,$ as above, we have that $\Sigma$ is symmetric.
So, assume that there is $\Sigma'\subset\Sigma$ such that
$f_s$ is non totally umbilical,
so that $\Sigma',$ and so $\Sigma,$ is nonsymmetric.
In this case, the $\rho$ function of $\Sigma'$ is constant. Otherwise,
from the second equation in \eqref{eq-scalargraph02},  $f_s$ would be totally
umbilical. From this same equation, we have that
$\Sigma'$  has constant sectional curvature $K=\epsilon(1-\rho^2)=\epsilon\theta^2.$
So, from the first equality in \eqref{eq-scalargraph02}, we have
$k_i^s(p)k_j^s(p)=-\epsilon,$ which implies that each $f_s$ is flat, that is,
$\Sigma'$ is a constant angle flat-foliated graph $\Sigma_K.$

The above reasoning  shows that, in a neighborhood of a point at which
$\theta T\ne 0,$ $\Sigma$ is either a symmetric $(f_s,\phi)$-graph or a
flat-foliated graph. In the former case, as we know, $K>\epsilon.$ In the latter
case, $K=\epsilon\theta^2\ne\epsilon.$ Therefore, we can argue just as in the proof of
Theorem \ref{th-sectionalcurvature01} to show that
$\Sigma$ is contained in  a horizontal hyperplane if $T$
vanishes on an open set of $\Sigma.$
Since neither parabolic $(f_s,\phi)$-graphs nor flat-foliated graphs are flat,
we can also argue as  in the proof of
Theorem \ref{th-sectionalcurvature01} to show that
$\Sigma$ is contained in  a cylinder over a horosphere if $\theta$
vanishes on an open set of $\Sigma.$

Finally, if neither $T$ nor $\theta$ vanishes in an open set $\Sigma,$ the set
$\mathcal O\subset\Sigma$ of points where $\theta T$ never vanishes is open and dense
in $\Sigma.$ Since the theorem is valid for any connected component of $\mathcal O,$ it follows
that it is valid for $\Sigma$ as well.
\end{proof}

\end{document}